\documentclass[12pt]{amsart}

\usepackage{amssymb,amsmath,amsfonts,array}
\usepackage[numbers]{natbib}
\usepackage{hyperref}
\usepackage[perc]{overpic}
\usepackage[all]{xy}
\usepackage{todonotes}
\usepackage{comment}
\usepackage{soul}
\usepackage{cleveref}

\newtheorem{theorem}{Theorem}[section]
\newtheorem{corollary}[theorem]{Corollary}
\newtheorem{proposition}[theorem]{Proposition}
\newtheorem{lemma}[theorem]{Lemma}

\theoremstyle{definition}
\newtheorem{definition}[theorem]{Definition}

\theoremstyle{remark}
\newtheorem{remark}[theorem]{Remark}
\newtheorem{example}[theorem]{Example}

\DeclareMathOperator{\R}{\mathbb{R}}

\renewcommand{\O}{\mathcal{O}}

\newcommand{\Z}{\mathbb{Z}}

\newcommand{\mg}{M_\gamma}
\newcommand{\Es}{E^{\mathrm{s}}}
\newcommand{\Eu}{E^{\mathrm{u}}}
\newcommand{\Ec}{E^{\mathrm{c}}}
\newcommand{\Ecs}{E^{\mathrm{cs}}}
\newcommand{\Ecu}{E^{\mathrm{cu}}}
\newcommand{\Fcs}{\mathcal{F}^{\mathrm{s}}}
\newcommand{\Fcu}{\mathcal{F}^{\mathrm{u}}}
\newcommand{\Fs}{\mathcal{F}^{\mathrm{s}}_\mathcal{O}}
\newcommand{\Fu}{\mathcal{F}^{\mathrm{u}}_\mathcal{O}}

\title{Hyperbolicity of complements of orbits in Anosov flows}
\author{Sergio Fenley}
\author{Tali Pinsky}
\author{Mario Shannon}
\date{\today}

\address{Department of Mathematics, Florida State University, United States}
\email{fenley@math.fsu.edu}

 \address{School of Mathematics\\
   Monash University, Australia\\
   and Department of Mathematics, The Technion IIT, Israel}
 \email{tali.pinsky@monash.edu}

\address{Facultad de Ingenier\'ia, Universidad de la Rep\'ublica, Uruguay}
\email{mjs8822@psu.edu}

\begin{document}

\begin{abstract}
    We show that if an Anosov flow on a 3-dimensional manifold has orientable stable and unstable foliations, then the complement of any filling periodic orbit is a hyperbolic manifold. This generalizes  the known case of the complement of a closed, filling geodesic orbit in the unit tangent bundle of a hyperbolic surface. 
    Furthermore, we show that the orientability condition on invariant foliations is necessary, by constructing a counterexample in the absence of this property. 
    In  the case of the suspension flows we obtain that the complement of every collection of periodic orbits is hyperbolic, while for the geodesic flow (regardless of orientability of the invariant foliations) the complement of every filling and anannular collection of periodic orbits is hyperbolic. 
\end{abstract}

\maketitle

%-------------------------------------------------------
\section{Introduction}
An Anosov flow on a 3-manifold is essentially one with contracting
and expanding directions.
There has been much interest in these flows, both for their deep connection with the topology of the ambient manifold \cite{Anosov1967Geodesic}, and also due to their dynamical significance in the family of chaotic flows on 3-manifolds \cite{BookDynamicsBeyondUniformHyperbolicity2005}.

Within the family of Anosov flows, the most studied flows are the algebraic ones: geodesic flows on the unit tangent bundle of hyperbolic surfaces, and suspensions of automorphisms  of the two dimensional torus induced by hyperbolic matrices. Algebraic flows are well understood in terms of their statistical properties  and the topologies of their sets of closed orbits. One of the main results in this direction is that if a closed geodesic  on a closed hyperbolic surface is filling, i.e. intersects any essential simple closed curve, then a corresponding closed orbit of the geodesic flow has a hyperbolic complement \cite{FoulonHasselblatt2013Contact}. 
In an analogous way, we define in Section \ref{sec:hyperbolicity} an orbit in a 3-manifold to be \emph{filling} if it intersects any essential embedded torus.
The main result of this paper is the following theorem.

\begin{theorem}\label{thm: orbits are hyperbolic for orientable foliations}
Let $\phi$ be an Anosov flow on a 3 dimensional manifold $M$ with orientable stable and unstable foliations. Then, for every filling periodic orbit $\gamma$ of $\phi$ the complement $M\setminus\gamma$ is a hyperbolic 3-manifold.
\end{theorem}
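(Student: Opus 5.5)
By Thurston's hyperbolization theorem for Haken manifolds, it suffices to show that $N = M\setminus \gamma$ (the interior of the compact manifold $M$ minus an open tubular neighborhood of $\gamma$) is irreducible, atoroidal and anannular, and not Seifert fibered. The manifold has a torus boundary component, so it is Haken once irreducible. The plan is to verify these properties in order, using the Anosov structure to rule out each obstruction.

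\emph{Irreducibility.} $M$ itself is irreducible, since its universal cover is $\R^3$ (lift of the stable foliation, Palmeira/Novikov). A sphere in $N$ therefore bounds a ball in $M$. If that ball contained $\gamma$, then $\gamma$ would be null-homotopic in $M$. This is impossible, because periodic orbits of Anosov flows have infinite order in $\pi_1(M)$.

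\emph{Atoroidality.} Let $T\subset N$ be an incompressible torus that is not boundary parallel. View $T$ in $M$.
\begin{itemize}
\item If $T$ is essential in $M$, the filling hypothesis says every essential torus meets $\gamma$, which contradicts $T\subset N$.
\item If $T$ is compressible in $M$, then, since $M$ is irreducible, $T$ bounds a solid torus $V$ or lies in a ball. The ball case contradicts incompressibility in $N$ unless $\gamma$ is inside the ball, which is impossible as above.
\end{itemize}
So $T$ bounds a solid torus $V\supset\gamma$ in which $\gamma$ is not a core; otherwise $T$ would be boundary parallel. Moreover $\gamma$ is not contained in a ball of $V$. Thus $\gamma$ is a nontrivial satellite knot in $V$ of winding number $w$, and $\gamma$ is homotopic in $M$ to $c^w$ for $c$ the core of $V$.

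This is where the orientability of $\Es,\Eu$ must enter. Lift to the orbit space $\O\cong\R^2$ with its bifoliation. The deck element $g$ represented by the core fixes the point corresponding to a lift of $\gamma$, or a point of a lozenge chain. With orientable foliations, $g$ preserves the orientation of the stable leaves and of the half-leaves. This excludes the half-turn behaviour, where a root of $\gamma$ reverses prongs, and forces $g$ to fix the same orbit, so that $g$ is a root of $[\gamma]$. I then want to show that $\gamma$ is isotopic in $N$-sense to the core. I would try to use that periodic orbits of Anosov flows, together with their local stable/unstable annuli, give $\gamma$ a neighborhood whose meridian disc in $V$ can be isotoped, using a Birkhoff-type annulus argument, so that $\gamma$ meets it once. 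I expect this step, deducing that $\gamma$ is the core of $V$ from the algebraic information in $\O$, to be the \textbf{main obstacle}. The alternative I would try first is Fried's and Barbot's theory of free homotopy classes and Birkhoff annuli: a torus that is incompressible in $N$ can be put in Birkhoff position relative to the flow, and then one analyses the chain of lozenges it determines.

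\emph{Anannularity and non-Seifert fibration.} An essential annulus in $N$, with boundary on $\partial N$, would show that $\gamma$ is a cable or torus knot, or give a torus after taking the boundary of a regular neighbourhood of the annulus union $\partial N$. This reduces to the atoroidal case, or produces an essential torus in $M$ disjoint from $\gamma$, which contradicts the filling hypothesis. Finally, if $N$ were Seifert fibered, then so would be $M$ after Dehn filling, or $M$ would be a lens space or contain an essential torus. Seifert Anosov manifolds are unit tangent bundles, where the orientability condition and the filling condition (every vertical torus over a simple closed curve meets $\gamma$) would force the fibration of $N$ to extend, so that $\gamma$ would be a fiber. A fiber has trivial orbit-space action of its class, since the class is central, and this contradicts $\gamma$ being a hyperbolic periodic orbit with filling. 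Together these verify all hypotheses of hyperbolization and prove the theorem.
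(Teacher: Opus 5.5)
Your outline follows the paper's structure: irreducibility, then atoroidality via the filling hypothesis plus a solid-torus analysis in $\O$, then annuli, then hyperbolization. It has a genuine gap, though, exactly where the Anosov structure has to control the \emph{knotting} of $\gamma$ and not just its homotopy class.

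\textbf{The ball case.} In the compressible case you claim that if $T$ lies in a ball then $T$ is compressible in $N$ unless $\gamma$ is inside the ball. The right dichotomy is whether $\gamma$ misses the ball. $\gamma$ can meet it in arcs that thread a knotted tunnel. The natural compressing disk, the tunnel's meridian, then meets $\gamma$, and $T$ can be incompressible in $N$. This really happens for general knots: if $\gamma=\gamma_0\# k$ with $k$ nontrivial, the swallow torus around the $k$-summand lies in a ball, is incompressible and not boundary parallel in the complement, and is invisible to the filling hypothesis.

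The paper closes this case (case (a) of Lemma~\ref{lem:atoroidal}) using primeness of periodic orbits, Lemma~\ref{lem:prime} and Remark~\ref{rem:several_arcs}. The half weak stable leaf of $\gamma$ lifts to a half-plane in $\tilde M\cong\R^3$. This gives embedded disks between the arcs of $\gamma$ in the ball and the sphere. Hence the tunnel is unknotted, and $T$ has a second compressing disk on the other side, disjoint from $\gamma$. Your annulus step has the same hole. You list cables and torus knots but not meridional annuli coming from connected-sum decompositions, which the paper again excludes by primeness.

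\textbf{The solid-torus case.} Your algebraic step is essentially Lemma~\ref{lem:power<=2}. Brouwer gives a fixed point of the core class, Theorem~\ref{thm: Chain of Lozenges} joins it to the lift of $\gamma$ by a chain of lozenges, and orientability propagates fixed points along the chain, forcing winding number $\pm1$. But you leave open the passage from ``homotopic to the core'' to ``isotopic to the core''. This cannot follow from $\pi_1$ data alone: the core with a local knot tied in, or the Mazur pattern, has winding number one without being isotopic to the core.

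The missing idea (Lemma~\ref{lem:isotopic to core}) is again the weak stable leaf. Work in the cover of $M$ corresponding to $\langle[\gamma]\rangle$, into which $V$ lifts homeomorphically. There $L^s(\gamma)$ becomes a properly embedded annulus with core $\gamma$. After an innermost-disk cleanup using irreducibility, it contains an embedded annulus in $V$ from $\gamma$ to a simple closed curve on $\partial V$. That curve is homotopic to the core, so it has class $(1,n)$ and is isotopic to the core.

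\textbf{Smaller points.}
\begin{itemize}
\item The class of a Seifert fiber does not act trivially on $\O$; it acts freely. The input you need is Barbot's theorem that no periodic orbit is freely homotopic to a power of the fiber.
\item Your annulus analysis must also rule out $M$ being a union of solid tori, and $M$ being a twisted $I$-bundle over the Klein bottle union a solid torus. The paper does this via the Seifert-fibered argument and Plante--Thurston growth.
\item You never check boundary irreducibility, though this is easy.
\end{itemize}
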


Note that the orientabilty condition of the invariant foliations implies that the ambient manifold $M$ is necessarily orientable. Many constructions of Anosov flows start with a given Anosov flow and produce a different flow via surgeries on periodic orbits and/or cutting and pasting techniques \cite{BonattiIakovoglou2023, FranksWilliams1989, HandelThurston1980}. The knowledge that the orbit has a hyperbolic complement is often useful in these constructions \cite{FoulonHasselblatt2013Contact,BowdenMann2022Stability}.

The orientability of the invariant foliations is needed to make sure that every orbit of the flow is not \emph{divisible}, when seen as an element of the fundamental group of the manifold (cf. Section \ref{sec:divisibility}). A slight generalization of the previous theorem is the following, where the orientability condition of the invariant foliations of the flow is removed. 

\begin{theorem}
\label{thm: orbits are hyperbolic when not divisible}
Let $\phi$ be an Anosov flow on a closed and orientable 3-dimensional manifold $M$. Then, for every filling and non-divisible periodic orbit $\gamma$ of $\phi$ the complement $M\setminus\gamma$ is a hyperbolic 3-manifold.
\end{theorem}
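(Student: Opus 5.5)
We will use Thurston's hyperbolization theorem for Haken manifolds with torus boundary. Let $N = M\setminus \nu(\gamma)$ be the complement of an open tubular neighborhood of $\gamma$. Then $N$ is compact and orientable, with a single torus boundary component. Since $\gamma$ is not null-homotopic, $N$ is not a solid torus, so it is Haken as soon as it is irreducible. It therefore suffices to show three things:
\begin{itemize}
\item $N$ is irreducible;
\item $N$ is atoroidal, meaning every incompressible torus in $N$ is boundary parallel;
\item $N$ is not Seifert fibered.
\end{itemize}
Irreducibility and most of the atoroidality argument are standard. The step where non-divisibility is really needed is the treatment of essential annuli. By the torus theorem, excluding these annuli is the same as excluding a Seifert fibered piece adjacent to $\partial N$.

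\textbf{Irreducibility.} $M$ carries an Anosov flow, so its universal cover is $\mathbb{R}^3$ and $M$ is irreducible. A sphere $S$ in $N$ therefore bounds a ball $B$ in $M$. If $\gamma\subset B$, then $\gamma$ would be null-homotopic in $M$, which is impossible because periodic orbits of Anosov flows have infinite order in $\pi_1(M)$. Hence $B\subset N$.

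\textbf{Atoroidality.} Let $T\subset N$ be an incompressible torus.
\begin{itemize}
\item Suppose $T$ is incompressible in $M$. Then $T$ is an essential torus in $M$ disjoint from $\gamma$, contradicting the filling hypothesis.
\item Otherwise $T$ compresses in $M$. Since $M$ is irreducible, $T$ then either bounds a solid torus $V$ in $M$ or lies in a ball.
 \begin{itemize}
 \item If $T$ lies in a ball, that ball must contain $\gamma$, again contradicting that $\gamma$ is nontrivial in $\pi_1(M)$.
 \item So $\gamma\subset V$ and $T$ is incompressible in $V\setminus\gamma$. Then $\gamma$ is homotopic in $V$ to $k$ times the core $c$, with $k\neq 0$ because $\gamma$ is nontrivial in $\pi_1$.
 \end{itemize}
\end{itemize}
Suppose $|k|\ge 2$. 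The core $c$ is not null-homotopic, since otherwise $\gamma$ would be. So $[\gamma]=[c]^k$ is a proper root decomposition, i.e.\ $\gamma$ is divisible, contrary to the hypothesis. Suppose instead $|k|=1$. Then $\gamma$ is homotopic to the core, and I would argue, by a Gabai/Scharlemann-type result on knots in solid tori (a braid of winding number one with incompressible boundary torus), that $\gamma$ is isotopic to the core unless it is a satellite inside $V$.

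The satellite case is where the dynamics must enter. Lifting to the universal cover, an incompressible torus in $V\setminus\gamma$ gives a $\mathbb{Z}^2$ in $\pi_1(N)$ whose image in $\pi_1(M)$ is cyclic, generated by a primitive element $g$ with $\gamma$ freely homotopic to $g^{\pm1}$. I would use the orbit space of the flow to show that such a configuration forces a free homotopy between $\gamma$ and a distinct closed curve that must itself be a periodic orbit. I would then show that the resulting essential annulus in $N$ yields either
\begin{itemize}
\item a Birkhoff annulus whose boundary is $\gamma$ and a power of $\gamma$, which non-divisibility excludes, or
\item a $\mathbb{Z}^2$ subgroup giving an essential torus in $M$ avoiding $\gamma$, which the filling hypothesis excludes.
\end{itemize}

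\textbf{Not Seifert fibered.} If $N$ were Seifert fibered, then, since $\pi_1(N)$ is infinite and $N$ is not a solid torus, there would be an essential annulus $A$ in $N$ with both boundary curves on $\partial N$. The same applies if $N$ contained an essential annulus at all. Let $\alpha$ be the slope of $\partial A$ on $\partial N$.
\begin{itemize}
\item If $\alpha$ is the meridian, $M$ would contain an essential sphere or lens space summand, which is impossible.
\item Otherwise $\alpha$ is homotopic in $M$ to $\gamma^m$ for some $m\neq0$. The annulus $A$ then gives a free homotopy in $M$ from $\gamma^{m}$ to $\gamma^{\pm m}$ that is nontrivial relative to the obvious one, so its core produces an element commuting with $\gamma^m$ outside the centralizer of $\gamma$.
\end{itemize}
By the Barbot/Fenley theory of free homotopies between periodic orbits (chains of lozenges), either $M$ contains a $\mathbb{Z}^2$ realised by an essential torus disjoint from $\gamma$, contradicting filling, or the centralizer of $\gamma^m$ is larger than the cyclic root of $\gamma$. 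In the latter case some element $h$ satisfies $h^j=\gamma^{\pm1}$ with $j\ge2$, contradicting non-divisibility.

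I expect this last dichotomy to be the main obstacle. It requires identifying, in the orbit space, the lozenge chain stabilised by $\gamma^m$. It also requires showing that the only way to get an extra commuting element without an essential torus is a divisibility root of $\gamma$; I would first try the case of a single lozenge chain and use the fact that lozenge corners fixed by a deck transformation form an invariant set. Once $N$ is shown irreducible, atoroidal, anannular and not Seifert fibered, Thurston's hyperbolization theorem gives that $M\setminus\gamma$ admits a complete finite-volume hyperbolic metric.
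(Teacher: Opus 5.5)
There is a genuine gap. Every tool you use ($\pi_1$, divisibility, lozenge chains) sees only the \emph{homotopy} class of $\gamma$, but hyperbolicity of $M\setminus\gamma$ depends on its isotopy class. The paper's dynamical input is the weak stable leaf of $\gamma$, an embedded surface containing $\gamma$, and your proposal never uses it.

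\textbf{The ball case.} In the atoroidality step, the inference ``if $T$ lies in a ball, that ball must contain $\gamma$'' is false. You only know that every ball containing $T$ \emph{meets} $\gamma$. The torus $T$ can bound a knot exterior $X\subset B$ while $\gamma$ threads the complementary knotted tunnel and then leaves $B$; this is case (a) in the proof of Lemma~\ref{lem:atoroidal}. For instance, take $\gamma=\gamma_0\#K$ with $K$ a nontrivial local knot. The swallow--follow torus $\partial X$, with $X\cong E(K)$, lies in a ball and is essential in $N$, yet the ball does not contain $\gamma$.

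\textbf{Meridional annuli.} The same issue breaks this case. Capping off gives a sphere meeting $\gamma$ twice, which simply bounds a ball in the irreducible $M$; no essential sphere or lens space summand appears. The annulus is essential exactly when the arc of $\gamma$ in that ball is knotted.

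What rules out both situations is primeness of periodic orbits (Lemma~\ref{lem:prime}, Remark~\ref{rem:several_arcs}). The half weak stable leaf of $\gamma$ yields disks showing that the arcs of $\gamma$ in such a ball are unknotted. Hence the tunnel is trivial and $T$ compresses on the side away from $\gamma$.

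\textbf{The case $|k|=1$.} The same missing ingredient undermines this case. Winding number one does not make $\gamma$ isotopic to the core of $V$; the Mazur pattern is a standard counterexample. A Gabai/Scharlemann-type appeal does not help, since you do not know $\gamma$ is a braid in $V$. Nor can an orbit-space argument decide this: $\gamma$ is already homotopic to the core, so no new free homotopy or second periodic orbit is forced.

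The paper uses Lemma~\ref{lem:isotopic to core} instead. The part of the weak stable leaf of $\gamma$ inside $V$ gives an embedded annulus isotoping $\gamma$ to a curve on $\partial V$. A curve on $\partial V$ homotopic to the core is a $(1,n)$ curve, hence isotopic to the core, so $T$ is boundary parallel.

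\textbf{Non-meridional annuli.} This step rests on a lozenge-chain dichotomy that you leave open. Moreover, producing a $\mathbb{Z}^2$ in $\pi_1(M)$ would not by itself give a torus \emph{disjoint from} $\gamma$, which is what the filling hypothesis requires.

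The paper (Lemma~\ref{lem:anannular}) sidesteps this. Let $B,B'$ be the two annuli into which $\partial A$ cuts $\partial N$. The paper pushes $A\cup B$ and $A\cup B'$ into $N$ and applies the filling hypothesis and the torus analysis to these surfaces. Only two global configurations remain:
\begin{itemize}
\item $M$ is a union of three solid tori with $\gamma$ homotopic to a Seifert fiber, which is excluded by Barbot;
\item $M$ is a twisted $I$-bundle over the Klein bottle union a solid torus, which is excluded by Plante--Thurston.
\end{itemize}
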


We prove in Section \ref{sec:divisibility} that a periodic orbit $\beta$ (traversed once) of an Anosov flow may be divisible as an element of the fundamental group, only if it is homotopic to $\alpha^{\pm2}$, where $\alpha$ is another periodic orbit. Furthermore, the orbit $\beta$ must have orientable stable/unstable leaves, while the orbit $\alpha$ necessarily has non-orientable leaves. As a consequence we obtain that:

\begin{corollary}
\label{cor:main}
If a periodic orbit $\gamma$ of an Anosov flow is filling and its weak stable and weak unstable manifolds are non-orientable, then $M\setminus\gamma$ admits a hyperbolic metric.
\end{corollary}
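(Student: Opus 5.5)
The plan is to reduce Corollary \ref{cor:main} to Theorem \ref{thm: orbits are hyperbolic when not divisible}. That theorem needs two things: $M$ is closed and orientable, and $\gamma$ is non-divisible. Filling is already part of the hypothesis.

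\textbf{Non-divisibility.} Suppose $\gamma$, traversed once, were divisible in $\pi_1(M)$. By the result announced above and proved in Section \ref{sec:divisibility}, $\gamma$ would then be freely homotopic to $\alpha^{\pm 2}$ for some other periodic orbit $\alpha$. The same result also says that $\gamma$ must have orientable weak stable and weak unstable leaves. This contradicts the hypothesis that these leaves are non-orientable. Hence $\gamma$ is non-divisible.

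\textbf{Orientability of $M$.} If $M$ is not orientable, I would pass to the orientation double cover $\hat M \to M$. The lifted flow on $\hat M$ is still Anosov. The preimage of $\gamma$ is either a single periodic orbit $\hat\gamma$ or two orbits.
\begin{itemize}
\item Filling should lift. An essential torus in $\hat M$ that misses the preimage of $\gamma$ should project to an essential torus or Klein bottle in $M$ missing $\gamma$. It is then easiest to read the paper's definition of filling so that it also covers Klein bottles, or equivalently their double-covering tori.
\item Non-divisibility should also survive the cover, since a divisor of $\hat\gamma$ in $\pi_1(\hat M)$ is a divisor in $\pi_1(M)$ as well.
\end{itemize}
Applying the theorem upstairs, and using that hyperbolicity is detected by covers, would show that $M \setminus \gamma$ is hyperbolic. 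In practice the paper's standing convention is probably that $M$ is orientable, in which case this step disappears.

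\textbf{Main obstacle.} The substantive input is the divisibility classification from Section \ref{sec:divisibility}, in particular its clause that the divisible orbit $\beta$ has orientable leaves. Given that result, the corollary is essentially immediate. The only delicate point is keeping the orientability hypothesis on $M$ consistent with Theorem \ref{thm: orbits are hyperbolic when not divisible}.
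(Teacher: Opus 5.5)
For orientable $M$ (the paper's standing assumption, which both Proposition~\ref{prop_divisibility_per_orbits} and Theorem~\ref{thm: orbits are hyperbolic when not divisible} require), your argument is correct and is essentially the paper's. The paper notes that the only source of an essential torus, case (b) of Lemma~\ref{lem:atoroidal} with $n=\pm 2$, forces $\gamma\simeq\delta^{2}$ and hence orientable weak manifolds for $\gamma$; this is exactly your ``non-orientable implies non-divisible'' step, after which Theorem~\ref{thm: orbits are hyperbolic when not divisible} applies.

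You should drop the non-orientable detour, because it would not work as sketched. Since $\Es$ and $\Eu$ are both non-orientable along $\gamma$, the loop $\gamma$ is orientation-preserving, so it always lifts to two orbits of $\hat M$. The double cover of $M\setminus\gamma$ is then the complement of two orbits, and Theorem~\ref{thm: orbits are hyperbolic when not divisible}, which is about a single orbit, does not apply to it. Your claim that filling lifts also has a gap: an essential torus in $\hat M$ need not project to an embedded surface in $M$.
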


%An example of the use of the hyperbolicity of the complement of a periodic orbit (and thus the hyperbolicity of a high enough Goodman–Fried Surgery) is used in Section \ref{sec:example} to construct an example of an Anosov flow with a filling orbit that is divisible (i.e. when the orbit itself is traversed once it is a power as an element in the fundamental group). 

Moving to the case of several periodic orbits, we show in Section \ref{sec:several orbits} that any collection of \emph{filling and anannular} periodic orbits of an Anosov flow has hyperbolic complement (cf. Theorem \ref{pro: filling anannular set is hyperbolic}). For the particular case of algebraic flows, we prove (cf. Proposition \ref{prop:several_orbits_algebraic_flow}) that 
\begin{itemize}
    \item any collection of periodic orbits in a suspension Anosov flow has hyperbolic complement;

    \item any anannular collection of filling periodic orbits in an Anosov geodesic flow has hyperbolic complement.
\end{itemize}
This is well known for geodesic flows, although does not explicitly appear in the literature for more than one orbit.

%This proves immediately that for algebraic flows Theorem \ref{thm: orbits are hyperbolic for orientable foliations} indeed holds without the condition that the foliations be orientable, via lifting the flow to a different one with orientable foliations, possibly lifting the periodic orbit to several orbits.

Finally in Section \ref{sec:example} we investigate what happens in the absence of non-divisibility. We show that Theorem \ref{thm: orbits are hyperbolic when not divisible} does not hold if we remove the non-divisibility hypothesis on the periodic orbit. More concretely, in Proposition \ref{prop:counter-example} we construct an example of a filling periodic orbit in an Anosov flow with non-orientable foliations, that is divisible as an element of the fundamental group, and whose complement is not hyperbolic. The construction begins with a geodesic flow on a non-orientable surface, and uses a \emph{derived from Anosov bifurcations} (DA) and \emph{Goodman-Fried surgeries} to obtain the required Anosov flow.

\subsection*{Acknowledgments}
This work began during a workshop at the Matrix Center in Australia in 2023; the authors thank the organizers and the center for hosting them. The second author thanks Juan Souto for exchanges about related ideas. The third author is partially supported by UdelaR-CSIC Grant 149/348 - Geometry and group actions.

%-------------------------------------------------------

\section{preliminaries}
\label{sec_preliminaries}

A flow $\phi=\{\phi^t\}_{t\in\R}$ generated by a non-singular vector field $X$ of class $C^1$ on a closed Riemannian 3-dimensional manifold $M$ is called {\it Anosov} if there is a splitting 
$TM=\Es \oplus \Ec \oplus \Eu$
of the tangent bundle of $M$ into a Whitney sum of three 1-dimensional bundles, where $\Ec$ is the line bundle colinear with the vector field $X$, which are invariant under the derivative action $D\phi^t:TM\to TM$, and there are positive constants $A$ and $B$ such that for any $t>0$,
\begin{align*}
    & \lVert D \phi^t(v^s) \rVert\leq Ae^{-Bt} \lVert  v^s\rVert,\ \ \ \text{for every}\;v^s \in \Es,\\
    & \lVert D \phi^t(v^u) \rVert\geq Ae^{Bt} \lVert  v^u\rVert,\ \ \ \ \text{for every}\;v^u \in \Eu.
\end{align*}
This kind of decomposition of the tangent bundle of the supporting manifold is known as a \emph{hyperbolic splitting}. We respectively call $\Es$ and $\Eu$ the \emph{stable} and \emph{unstable} bundles, and we call $\Ec$ the \emph{central} bundle.

Anosov flows are well-known for having a rich chaotic dynamical behavior; we refer to \cite{Katok-Hasselblatt} for an account on dynamical systems preserving a hyperbolic splitting. Among many interesting properties, Anosov flows have infinitely many periodic orbits (countably many). The hyperbolic splitting preserved by the flow implies that every periodic orbit is hyperbolic of saddle type. These flows can be either \emph{topologically transitive}, i.e. there exists a trajectory that is dense in the supporting manifold (e.g. the \emph{geodesic flow} on a closed \emph{hyperbolic} surface), or non-transitive (cf. \cite{FranksWilliams1989}).

One important property of Anosov flows, is that the 2-dimensional bundles $\Ecs=\Es\oplus\Ec$ and $\Ecu=\Ec\oplus\Eu$ integrate into a pair of codimension 1 foliations, respectively called \emph{weak stable} and \emph{weak unstable} foliations, denoted here by $\Fcs$ and $\Fcu$. These foliations are transverse to each other, invariant under the flow (so, in particular, saturated by flow-orbits), and their intersection determines a 1-dimensional foliation that coincides with the orbits of the flow.

The  weak stable (resp. weak unstable) foliation can be either orientable or non-orientable. It belongs to the class of \emph{taut foliations}, and they do not have compact leaves. Moreover, every leaf is homeomorphic to either a plane, a cylinder or a M\"obius band. A leaf is a plane if and only if it does not contain a closed orbit of the flow. In the case where a leaf is not a plane, it contains exactly one periodic orbit, non-contractible inside the leaf, and it is a cylinder whenever the $\Es$ bundle (resp. $\Eu$ bundle) is orientable along the periodic orbit, or M\"obius band otherwise. If the supporting manifold $M$ is orientable observe that, along a given periodic orbit, either both bundles $\Es$ and $\Eu$ are orientable, or both non-orientable. Finally, the center-stable and center-unstable foliations are minimal in the case where the flow is transitive, or non-minimal otherwise.

A periodic orbit of an Anosov flow, to any power, cannot be null-homotopic. This follows from the classical theorem of Novikov on Reeb components (cf. \cite{Novikov1965Foliations}), since otherwise the weak-stable/unstable foliations would contain a compact leaf. Therefore, when lifting to the universal cover of the supporting manifold, the flow-orbits lift to properly embedded lines, and the weak stable/unstable leaves lift to properly embedded planes in the universal cover. This implies, by the classical theorem of Palmeira on manifolds foliated by planes (cf. \cite{Palmeira1978}), that this universal cover is homeomorphic to $\R^3$.

Denote the universal cover of $M$ by $\widetilde{M}$. We will use the accent $\sim$ to denote the lift to the universal cover of objects in $M$. A very useful construction associated to an Anosov flow $\phi$ on $M$ is the following:

\begin{definition}
The \emph{orbit space} associated to $\phi$, denoted by $\O$, is the quotient space of $\tilde M$ obtained by identifying any two points if they lie in the same leaf of the lift of the foliation by flow-orbits. 
\end{definition}

In \cite{Fenley1994AnosovFlows, Barbot1995Characterization} it is proven that the universal cover of an Anosov flow endowed with the lift of the foliation by flow-orbits is homeomorphic to $\R^3$ endowed with the foliation by vertical lines $\{\text{point}\}\times\R$. In turn, this implies that the orbit space $\O$ is homeomorphic to the plane $\R^2$. Observe that, since the lifted foliations $\tilde{\Fcs}$ and $\tilde{\Fcu}$ are saturated by flow-lines, they induce a pair of transverse 1-dimensional foliations in the quotient, that we denote here by $\Fs$ and $\Fu$. The space $\O$ endowed with this pair of foliations is called a \emph{bi-foliated plane}.

If the leaf space of $\tilde{\Fcs}$ (resp. $\tilde{\Fcu}$) is homeomorphic to the real line, then the other leaf space $\tilde{\Fcu}$ (resp. $\tilde{\Fcs}$) is also homeomorphic to the real line, and the Anosov flow is said to be \emph{$\R$-covered}. In this case, the bi-foliated plane $(\O,\Fs,\Fu)$ can be of two types up to topological equivalence: Either $\R^2$ endowed with the foliations by vertical/horizontal lines, or an open band in $\R^2$ delimited by two parallel lines of slope $\neq 0,\infty$ endowed with the restriction of the foliations by vertical/horizontal lines. In the first case, the bi-foliated plane is said to have a \emph{global product structure} and the flow is necessarily orbit equivalent to a \emph{suspension} Anosov flow (cf. \cite{Barbot1995Characterization}). In the second case, the bi-foliated plane is called \emph{skewed $\R$-covered}, and the flow is orbit equivalent to a \emph{contact Anosov flow} (cf. \cite{marty2024skewedanosovflowsorbit}). There is a big family of Anosov flows entering into this second category, that includes all geodesic flows of hyperbolic surfaces. In general, an Anosov flow need not to be $\R$-covered, which means that the leaf spaces of the invariant foliations must contain \emph{non-separated leaves}.

\begin{figure}[t]
    \centering
    \begin{overpic}[width=12 cm]{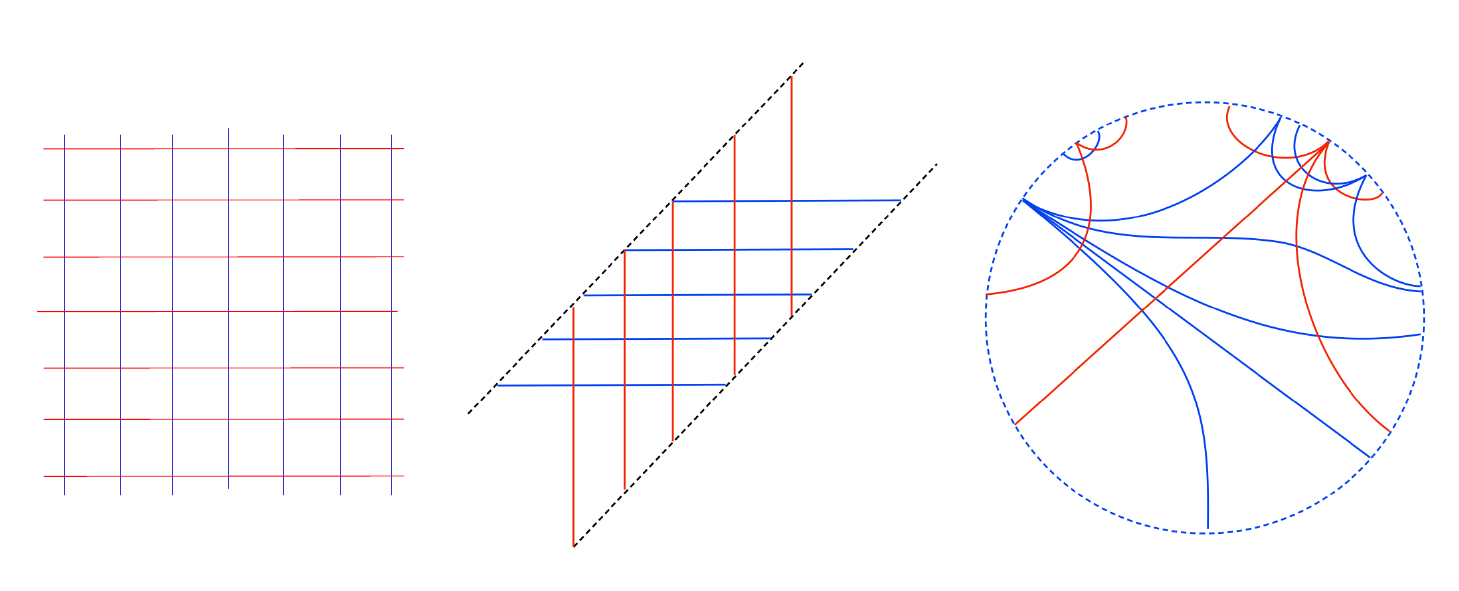}
    \end{overpic}
    \caption{The bi-foliated orbit space $(\mathcal{O},\Fs,\Fu)$. Left: global product structure; Center: skewed $\mathbb{R}$-covered; Right: general case having non-separable leaves.}
    \label{fig:action}
\end{figure}

Let $\pi_1(M,x_0)$ denote the fundamental group of $M$ based at some point $x_0$. This group acts on $\tilde{M}$ preserving the lift of the foliation by flowlines and preserving $\tilde{\Fcu}$ and $\tilde{\Fcs}$, and therefore it induces an action on $\O$ preserving the 1-dimensional foliations $\Fu$ and $\Fs$. The conjugacy class of the action of the fundamental group on the bi-foliated plane is an invariant of the Anosov flow up to orbital equivalence (cf. \cite{Barbot1995Characterization}), and it turns to be very helpful in the analysis of these dynamical systems. We will use it in Section \ref{sec:divisibility}.

An important operation on Anosov flows is the so-called \emph{Goodman-Fried surgery}. Given an Anosov flow $(\phi,M)$ on a 3-manifold, a periodic orbit $\gamma$ and an integer $m$, this operation allows to construct a new Anosov flow on a new 3-manifold, by performing a \emph{Dehn surgery} of slope $m$ along the closed curve $\gamma$ with hyperbolic complement. The surgery is adapted to the flow in such a way that the new manifold is endowed with an Anosov flow. This operation has been central in the construction of non-classical Anosov flows. For instance, Goodman obtained the first examples of Anosov flows on hyperbolic 3-manifolds, by performing this operation along a periodic orbit. We refer to the classical references \cite{Goodman1983DehnSurgery} and \cite{Fried1983Transitive}, as well as to \cite{shannon_thesis} for a unified point of view of Goodman surgery and Fried surgery.

A 3-dimensional manifold $M$ is \emph{hyperbolic} if it admits a \emph{hyperbolic metric}, i.e. a complete Riemannian metric of constant sectional curvature equal to $-1$. One of the cornerstones of the theory of 3-dimensional manifolds is the fact, that the existence of a hyperbolic metric in 3-dimensional manifold can be characterized through purely topological properties. For the case of interest of this text, the result can be stated as follows:

\begin{theorem}[Thurston Hyperbolization]
\label{thm_hyperbolization}
Let $M$ be a compact and orientable 3-manifold with non empty boundary, and  whose boundary components are tori. Then, $M$ admits a complete hyperbolic metric if and only if it is \emph{irreducible}, \emph{boundary irreducible}, \emph{atoroidal} and \emph{anannular}. 
\end{theorem}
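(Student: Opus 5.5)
This statement is Thurston's hyperbolization theorem, which we use as a black box. Here is a sketch of the standard route, splitting it into the two implications.

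\emph{Necessity.} Suppose $\mathrm{int}(M)$ carries a complete hyperbolic metric. The universal cover is $\mathbb{H}^3$, which is contractible, so $M$ is aspherical. The sphere theorem then gives irreducibility, since an essential sphere would yield a nontrivial element of $\pi_2$. Boundary irreducibility follows because each torus cusp injects on $\pi_1$: the peripheral $\mathbb{Z}^2$ consists of parabolic elements, so no boundary curve is null-homotopic. For atoroidality and anannularity, take an essential torus or annulus. It gives a $\mathbb{Z}^2$ subgroup, or a pair of boundary-parallel curves that are freely homotopic in $M$. Discreteness of the holonomy group forces every $\mathbb{Z}^2$ subgroup to be parabolic, fixing a common point at infinity. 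It follows that the torus is boundary parallel and the annulus is inessential, since its core would be a parabolic element shared by two cusps, or conjugate in a way incompatible with discreteness.

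\emph{Sufficiency.} This is the substantial direction. First, $M$ is irreducible and has nonempty boundary with incompressible tori. Then $H_1(M;\mathbb{Q})\neq 0$ by the half-lives-half-dies argument, so $M$ contains a properly embedded incompressible surface. Hence $M$ is Haken and admits a finite Haken hierarchy. One then argues by induction along the hierarchy, cutting along incompressible surfaces down to balls.

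At each gluing step one has geometrically finite hyperbolic structures on the pieces and must reglue along the cut surface $S$. Two cases arise:
\begin{itemize}
\item If the piece is not an $I$-bundle, the regluing is achieved by finding a fixed point of $\tau\circ\sigma$ on Teichm\"uller space, where $\sigma$ is the skinning map and $\tau$ is the gluing involution. This uses the bounded image theorem; the hypotheses of atoroidality and anannularity are exactly what make the skinning map have bounded image.
\item If $M$ fibres over the circle, one instead uses the double limit theorem to produce the hyperbolic structure on the fibred manifold, with pseudo-Anosov monodromy guaranteed by atoroidality.
\end{itemize}
Finally, the completed structure is shown to be finite volume with torus cusps, using the boundary-tori hypothesis together with the absence of essential annuli.

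\emph{Main obstacle and how we handle it.} The key difficulty is the compactness needed for the fixed-point argument, namely the bounded image and double limit theorems. These rest on Thurston's theory of geometric limits of Kleinian groups and on the Morgan--Shalen and Otal analysis of degenerations via $\mathbb{R}$-trees. We do not reprove these here; we invoke the theorem as established by Thurston, with the complete proofs in the work of Otal and of Kapovich.
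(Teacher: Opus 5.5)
Like the paper, which gives no proof and simply cites Thurston and Kapovich, you use the theorem as a black box, so the overall approach is the same. There is a real problem in your necessity sketch, though. The step ``each torus cusp injects on $\pi_1$: the peripheral $\mathbb{Z}^2$ consists of parabolic elements'' presupposes that every boundary torus of $M$ is a cusp of the hyperbolic structure. Completeness alone does not give this.

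In fact, as literally stated (complete, with no volume condition), the ``only if'' direction is false:
\begin{itemize}
\item For $g$ loxodromic, $\mathbb{H}^3/\langle g\rangle$ is a complete hyperbolic manifold homeomorphic to $\mathrm{int}(D^2\times S^1)$, and $D^2\times S^1$ is boundary reducible.
\item The quotient of $\mathbb{H}^3$ by a rank-two parabolic group is a complete hyperbolic manifold homeomorphic to $\mathrm{int}(T^2\times I)$. But $T^2\times I$ contains the essential annulus $c\times I$; its boundary circles lie on different boundary tori, so it has no boundary compression.
\end{itemize}
The missing hypothesis is finite volume, which is the standard form of the theorem for manifolds with torus boundary. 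If $\mathrm{int}(M)$ carries a complete hyperbolic metric of finite volume, each boundary torus is a rank-two cusp. Your parabolic-subgroup arguments for boundary irreducibility, atoroidality and anannularity then go through. This is consistent with your sufficiency sketch, which produces a finite-volume structure. It is also harmless for the paper, whose main proofs use only the sufficiency direction.

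Two smaller points. First, irreducibility does not come from the sphere theorem, which goes from $\pi_2(M)\neq 0$ to an embedded essential sphere. The direction you need is that every embedded sphere bounds a ball. To get it, lift the sphere to $\widetilde M\cong\mathbb{H}^3\cong\mathbb{R}^3$, where it bounds a ball by Alexander's theorem. By Brouwer's fixed point theorem this ball cannot be nested with any of its deck translates, so it is disjoint from them and embeds in $M$. Second, the bounded image theorem requires the cut-open piece to be acylindrical, and that does not follow from $M$ being anannular. Pieces of a hierarchy can contain essential annuli (windows), and Thurston's gluing argument needs additional input there. So your sentence attributing bounded image to the atoroidal and anannular hypotheses on $M$ is inaccurate, although it does not break the logic, since you defer this step to the literature.
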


A compact 3-manifold $M$ is:
\begin{enumerate}
    \item \emph{Irreducible} if every embedded sphere $S\hookrightarrow M$ is the boundary of a 3-dimensional ball.

    \item \emph{Boundary irreducible} if for any embedded disk  $(D,\partial D)\hookrightarrow (M,\partial M)$, $\partial D$ also bounds a disk in $\partial M$. 

    \item \emph{Atoroidal} if every embedded torus $T\hookrightarrow M$ either admits a compressing disk (an embedded disk $D\hookrightarrow M$ such that $\partial D\hookrightarrow T$), or else is \emph{boundary parallel} torus (isotopic to a boundary component of $M$).

    \item \emph{Anannular} if any embedded annulus $(A,\partial A)\hookrightarrow (M,\partial M)$ either admits a compressing disk, or a boundary compression where $\partial D$ can be decomposed as $\alpha\subset A$ and $\beta\subset \partial M$, each connecting the two boundary components.
\end{enumerate}

\begin{remark}
%If an embedded torus in a 3-manifold is non-essential, it is the boundary of an embedded solid torus. 
Irreducibility and boundary irreducibility implies that if an embedded annulus $(A,\partial A)\hookrightarrow (M,\partial M)$ is non-essential (i.e. has a compressing disk), then it is boundary parallel.    
\end{remark}

See to \cite{ThurstonThreeDymensionalManifoldsKleinian} and \cite{kapovich2001hyperbolic} for an account on hyperbolic 3-manifolds and Theorem \ref{thm_hyperbolization}. See \cite{76Hempel3MfdsBook} for essential surfaces in 3-dimensional manifolds and related concepts, and \cite{Purcell_Knot_theory} for a general account on knot-hyperbolicity. 

%-------------------------------------------------------
\section{Primeness}
\label{sec:primeness}

\begin{definition}
A knot $K$ in a three-dimensional manifold $M$ is \emph{prime} if for any sphere $S$ that $K$ intersects at exactly 2 points there is a disk embedded in $M$, so that its interior does not intersect $S$ or $K$, and so its boundary is a union of an arc in $K$ and an arc in $S$. 
\end{definition}

In other words this means we can slide the knot along the
disk so that the sphere does not intersect the isotoped
knot. 

\begin{definition}
    A knot $K$ in a three-dimensional manifold $M$ is \emph{hyperbolic} if $M\setminus K$ admits a complete hyperbolic metric.
\end{definition}

A hyperbolic knot $K$ in an irreducible manifold is necessarily also prime, 
 as any sphere in $M$ punctured twice by the knot is an annulus in the knot complement. As a hyperbolic manifold is anannular, the annulus is either compressible or boundary compressible, implying that the knot is trivial in a ball on one side of the sphere. 
 
 Therefore, we begin by proving that an orbit in an Anosov flow is always prime in the appropriate sense. This is weaker than hyperbolicity, but holds with no assumptions on the orbit, and will be a step in proving hyperbolicity in subsequent sections.

\begin{lemma}\label{lem:prime}
Let $\phi$ be an Anosov flow on  3-dimensional manifold $M$. Then any periodic orbit of $\phi$ is a prime knot.
\end{lemma}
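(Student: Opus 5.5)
Let $\gamma$ be a periodic orbit and $S\subset M$ an embedded sphere meeting $\gamma$ transversely in exactly two points. The plan is to work in the universal cover. By the preliminaries, $\widetilde M\cong\R^3$, so $M$ is irreducible, and $S$ lifts homeomorphically to spheres in $\widetilde M$. Fix one lift $\tilde S$. It bounds a ball $\tilde B\subset\widetilde M$, which projects to a ball $B\subset M$ bounded by $S$; the ball is embedded because its interior is disjoint from the other lifts of $S$. So $S$ separates $M$ into the ball $B$ and the complement $M\setminus B$.

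Since $\gamma$ meets $S$ in two points $p,q$, it is split into two arcs. One arc, call it $a$, lies in $B$, and the other, $b$, lies in $M\setminus B$. Lift to $\widetilde M$. The lifted orbits are properly embedded lines, so a lift $\tilde\gamma$ of $\gamma$ entering $\tilde B$ at a lift $\tilde p$ must leave it. A properly embedded line cannot stay in a compact ball, and the only boundary points available are lifts of $p$ and $q$ lying on $\tilde S$. Hence the lift $\tilde a\subset\tilde B$ of $a$ is a compact arc with endpoints $\tilde p,\tilde q$ on $\tilde S$.

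The key claim is that $a$ is unknotted in $B$, meaning $(B,a)$ is a trivial ball--arc pair. Equivalently, some disk in $B$ has boundary equal to $a$ together with an arc of $S$. This is exactly the disk required in the definition of prime, with interior disjoint from $S$ and $\gamma$ because $b$ lies outside $B$.

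To prove the claim, use the product structure: $(\widetilde M,\tilde\phi)\cong(\R^2\times\R,\ \text{vertical lines})$, proven in the cited work of Fenley and Barbot. In these coordinates $\tilde a$ is a segment $\{x\}\times[s,t]$ of a vertical line. A vertical straight segment is unknotted relative to the ball it sits in, but only in a weak sense, since $\tilde B$ is an arbitrary ball, not a round one. Here I would argue instead via the complement of the whole line. Let $\tilde\gamma=\{x\}\times\R$. Then $\widetilde M\setminus\tilde\gamma$ has fundamental group $\Z$, and $\tilde B\setminus\tilde a$ sits inside it with its boundary sphere. Because $\tilde\gamma$ meets $\tilde S$ in exactly two points, $\tilde S$ splits $(\R^3,\tilde\gamma)$ as a connected sum of $(S^3\text{-like pair }\tilde B\cup\infty,\ \tilde a\cup\text{arc})$ with the pair outside. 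The pair $(\R^3,\text{straight line})$ is the trivial knot. By uniqueness of prime decomposition / additivity of genus (Schubert), both summands must be trivial. In particular $(\tilde B,\tilde a)$ is a trivial ball--arc pair.

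Finally, project this back to $M$. The covering restricted to $\tilde B$ is an embedding onto $B$, so the trivializing disk in $\tilde B$ projects to the desired disk in $B$. This gives primeness.

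The main obstacle is the justification that the lift of the arc inside the ball is exactly one arc and that $\tilde B$ embeds. One needs $\tilde B$ to be disjoint from its translates. This holds because translates of $\tilde S$ are disjoint from $\tilde S$, and nested balls are impossible for a deck transformation of infinite order acting freely. The Schubert additivity step is standard.
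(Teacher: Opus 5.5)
Your argument is correct, but it takes a different route from the paper.

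\textbf{The paper's route.} The paper builds the trivializing disk directly from the dynamics. Take the half-leaf of the weak stable manifold of $\gamma$; its lift is a half-plane with boundary $\tilde\gamma$. In it, the arc of the sphere--leaf intersection joining the two punctures, together with $\gamma\cap B$, bounds a disk in the leaf. The other components of that disk's intersection with $S$ are then removed by innermost-disk arguments using irreducibility.

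\textbf{Your route.} You use the Fenley--Barbot product structure only to know that one lifted orbit is unknotted, i.e.\ $(\widetilde M\cup\{\infty\},\tilde\gamma\cup\{\infty\})\cong(S^3,\text{unknot})$. You then invoke Schubert's additivity of genus, plus the standard correspondence between ball--arc pairs and knots (closing by a trivial arc does not change the exterior), to conclude that $(\tilde B,\tilde a)$ is trivial.

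\textbf{Comparison.} Your version is short and avoids all general-position bookkeeping with the foliation. Since everything happens inside $\tilde B$, which maps homeomorphically onto $B$, the projection step is immediate. The price is heavier black boxes. More substantively, it does not directly give the several-arc version in Remark~\ref{rem:several_arcs}, which the paper relies on later, e.g.\ in case (a) of Lemma~\ref{lem:atoroidal}. Genus additivity only addresses spheres meeting a knot in two points. The leaf-based construction, by contrast, produces disjoint disks for different arcs, because distinct local weak stable half-leaves are disjoint.

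\textbf{Minor points.}
\begin{itemize}
\item The equality $\tilde\gamma\cap\tilde S=\{\tilde p,\tilde q\}$ holds simply because $\tilde S\to S$ is a homeomorphism, so $\tilde S$ contains exactly one lift of each of $p$ and $q$.
\item The embedding of $\tilde B$ follows at once from irreducibility of $M$ (the ball $B$ is simply connected, so it lifts), or from Brouwer's fixed point theorem applied to nested translates.
\item Your remark about $\pi_1(\widetilde M\setminus\tilde\gamma)$ is never used.
\end{itemize}
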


\begin{proof}
Let $\gamma$ be a periodic orbit of $\phi$, and consider a sphere $S\subset M$ intersecting $\gamma$ at exactly two points. Consider the universal cover $\tilde M$ of $M$. As we explained at Section \ref{sec_preliminaries}, the fact $M$ carries an Anosov flow implies $\tilde M\cong\R^3$. The lift to the universal cover $\tilde M$ of any orbit $\gamma$ is an infinite line bounding a half plane, the plane being the lift of a half--stable leaf through $\gamma$. Observe also that, the fact that $\tilde M\cong\R^3$ implies that a 3-manifold carrying an Anosov flow is always irreducible (see \cite{Rosenberg1968} and \cite{Novikov1965Foliations}).

Now, start with an embedded sphere $S$ intersecting the knot $K$ in two points. This sphere bounds a unique ball $B$ 
in $M$. Put the sphere in general position with respect to the weak stable foliation, so now the intersection of 
$\gamma$ with $S$ bounds a single arc $\delta$ contained in the intersection of $S$ with a half leaf $L$ of the weak stable manifold of $\gamma$. Inside $L$, the union $Z=\delta\cup (\gamma \cap B)$ is a closed curve, that is also contained in $B$. By lifting everything to $\tilde M$, we can see that $Z$ lifts to a closed curve $\tilde Z$ which bounds a disk $\tilde D$ in $\tilde L$,
so its projection to $M$ bounds a disk $D$ contained in the weak stable leaf of $\gamma$. 
Now, without modifying the intersection $S\cap \partial D$, put $S$ in general position with respect to $D$, so the intersection of $S$ and $D$ is a union of simple closed curved, being $Z=S\cap\partial D$ one of its components. If $S\cap D$ only contains $Z$, the the disk $D$ is contained in $B$ and we are done. Otherwise, observe that each of the curves different from $Z$ bounds a disk contained in $S$. Therefore, by using that $M$ is irreducible, standard \emph{innermost disk arguments} show that each of these
can be inductively removed, ending up with a disk $D'$ contained in $B$ and whose boundary coincides with $Z$. This shows that $\gamma$ is prime.
\end{proof}

\begin{remark}\label{rem:several_arcs}
    Note that one can consider in a similar way a ball $B$ intersecting an orbit $\gamma$ along a few arcs. The arcs are jointly trivial in the sense that they can be isotoped to $\partial B$ simultaneously. This can be done as the local weak stable manifolds do not intersect.
\end{remark}

%-------------------------------------------------------
\section{Divisibility of periodic orbits}
\label{sec:divisibility}

In preparation for studying whether the complement of a filling periodic orbit is hyperbolic, we need to study the relationship between periodic orbits of an Anosov flow and the fundamental group of the supporting manifold.

Let $\gamma$ be a periodic orbit of an Anosov flow $(\phi,M)$. We always consider it as a simple closed curve in the 3-manifold $M$; that is, the image of an injective parametrization $\gamma:[0,1]\to M$. Observe that every periodic orbit $\gamma$ of the flow determines a conjugacy class in $\pi_1(M,x_0)$, obtained by joining $\gamma$ to the base point $x_0$ using arbitrary paths in $M$. Let us denote by $c(\gamma)$ this conjugacy class. By choosing some element $b\in c(\gamma)$ we can write $c(\gamma)=[b]$, where the bracket $[\cdot]$ is used to denote the conjugacy class of an element of the fundamental group.

An element $b\in\pi_1(M,x_0)$ is called \emph{non-divisible} if the equality $b=a^n$ with $a\in\pi_1(M,x_0)$ implies $n=\pm 1$. We are interested in the following property of Anosov flows:

\begin{proposition}\label{prop_divisibility_per_orbits}
    Let $\gamma$ be a periodic orbit of an Anosov flow $(\phi,M)$ on an orientable 3-manifold, and let $b\in c(\gamma)$. Consider $a\in\pi_1(M,x_0)$ and $n\in\Z$ satisfying $b=a^n$. Then $|n|=1$ or $|n|=2$. Moreover, if $b=a^{\pm 2}$ holds, then the weak stable and weak unstable manifolds of $\gamma$ are orientable, and there exists a periodic orbit $\delta$ with non-orientable weak manifolds such that $\gamma$ is freely homotopic to $\delta^2$.
\end{proposition}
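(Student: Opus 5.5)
We work in the orbit space $(\O,\Fs,\Fu)$ and use the action of $\pi_1(M,x_0)$ on it, as set up in Section \ref{sec_preliminaries}. First we fix a lift $\tilde\gamma$ of $\gamma$, which is a point $p\in\O$, and choose $b\in c(\gamma)$ to be the deck transformation that preserves $\tilde\gamma$ and translates it positively, i.e.\ along the flow direction. Then $b$ fixes $p$. Since $\gamma$ is a hyperbolic saddle, $b$ acts on the leaves $\Fs(p)$ and $\Fu(p)$ by contraction and expansion respectively (or the reverse, depending on conventions), and $p$ is the unique fixed point of $b$ on $\Fs(p)\cup\Fu(p)$. Whether $b$ preserves or swaps the two half-leaves of $\Fs(p)$ records whether the weak stable leaf of $\gamma$ is orientable. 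Orientability of $M$ gives the same answer for $\Fu(p)$.

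\textbf{Step 1: $a$ fixes $p$.} Suppose $b=a^n$ with $n\neq 0$. Because $a$ commutes with $b$, it sends $\mathrm{Fix}(b)$ to itself. We will use the standard fact that a nontrivial element acting on $\O$ has either no fixed points or a discrete set of fixed points. Each fixed point of $b$ corresponds to a periodic orbit freely homotopic to $\gamma$ (or to its inverse). Fixed points of $b$ lying in a chain of lozenges are linked to one another through adjacent lozenges, and the generic case is that $\mathrm{Fix}(b)=\{p\}$.

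\begin{itemize}
\item If $\mathrm{Fix}(b)=\{p\}$, then $a$ fixes $p$ directly.
\item In general, the orbit of $p$ under $a$ has size $k$ dividing $n$, so $a^k$ fixes $p$. We then have to rule out $a$ acting as a nontrivial permutation of a chain of lozenges. We expect this to be the main obstacle. The first thing we would try is to use that $a$ preserves the cyclic or linear order of the corners of the chain. A translation along the chain has infinite order, so it cannot have a finite power equal to $b$, which fixes every corner. An involutive flip of a chain is excluded because $a^n=b$ does not fix the corners in the way a flip would require.
\end{itemize}

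Once we know that $a$ fixes $p$, the element $a$ lies in the stabilizer of $p$. That stabilizer is infinite cyclic, because it corresponds to deck transformations preserving the line $\tilde\gamma$ (the fundamental group is torsion-free since $\tilde M\cong\R^3$). So $a$ preserves $\tilde\gamma$ and projects to a closed orbit that $\gamma$ covers. Because $\gamma$ is traversed once in $M$, the translation by $b$ is the generator of the stabilizer only up to the possibility that $\tilde\gamma$ covers a smaller orbit. That cannot happen, since $\tilde\gamma$ projects injectively onto $\gamma$.

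\textbf{Step 2: $a$ does not fix $p$, and $|n|=2$.} The conclusion of Step 1 forces $|n|=1$ whenever $a$ fixes $p$. So in the remaining case $a$ does not fix $p$, although $a^n$ does. We then show the following.
\begin{itemize}
\item $a$ must map $p$ to another fixed point $q$ of $b$, namely the opposite corner of a lozenge, or the corner of an adjacent lozenge.
\item The lozenge structure forces $a^2$ to fix $p$.
\item Since $a$ cannot act freely on a chain while having a power equal to $b$, the element $a$ fixes some point $r$ in the chain. This point should be the midpoint of a shared side, or the common corner of two lozenges that $a$ swaps.
\end{itemize}
The periodic orbit $\delta$ corresponding to $r$ has $a$ as its generator, and $a^2=b$ up to the identification of Step 1. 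Because $a$ swaps the lozenges adjacent to $r$ in the way just described, $a$ reverses the half-leaves of $\Fs(r)$, so $\delta$ has non-orientable weak manifolds. In contrast $b=a^2$ preserves all half-leaves at $p$, so $\gamma$ has orientable weak leaves. Higher $|n|$ is excluded because a homeomorphism fixing $r$ and preserving its four quadrants, whose square is $b$, acts on the local prongs of $r$ with order at most $2$.

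Note that $\gamma\neq\delta$ is consistent here: the orbit $\gamma$ is freely homotopic to $\delta^2$ but is a different orbit.
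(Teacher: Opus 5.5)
There is a genuine gap. Your setup is right, and your treatment of the case $a(p)=p$ is essentially the paper's first case: the stabilizer of $p$ is $\langle b\rangle$, so $a=b^m$, $b=b^{nm}$ and $n=\pm1$. But the two claims that carry the proposition are asserted rather than proved.

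\textbf{Existence of a fixed point of $a$.} Your phrase ``$a$ cannot act freely on a chain while having a power equal to $b$'' presupposes several things you never establish: that the $a$-orbit of $p$ lies on an $a$-invariant linear chain of lozenges, and that $a$ acts on it by an order-preserving or order-reversing map. The paper gets the fixed point from Brouwer's theorem instead. Since $M$ is orientable and deck transformations preserve the flow direction, $a$ acts on $\O\cong\R^2$ preserving orientation. The point $p$ is $a$-periodic, so $a$ has a fixed point $w$.

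\textbf{$a^2$ fixes $p$.} Your statement ``the lozenge structure forces $a^2$ to fix $p$'' is exactly the content of $|n|\le 2$. Your later justification (a homeomorphism ``preserving its four quadrants, whose square is $b$'') is circular, since it assumes $n=2$. It also contradicts your own claim that $a$ reverses the half-leaves at $r$. The missing mechanism is a propagation argument:
\begin{enumerate}
\item Because $a$ preserves orientation on $\O$, at $w$ it either preserves both foliation orientations or reverses both. Hence $a^2$ fixes every quadrant at $w$.
\item A lozenge with corner $w$ is determined by its quadrant. A chain of lozenges from $w$ to $p$ exists by Theorem~\ref{thm: Chain of Lozenges}, since both are fixed by $a^n$. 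So $a^2$ fixes its first lozenge, hence the far corner and the quadrants there.
\item Inductively $a^2$ fixes every lozenge in the chain, so $a^2(p)=p$.
\end{enumerate}

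The same propagation proves the orientability claims you only assert. First, $b=a^{\pm 2}$ preserves the quadrants at $p$, so $\gamma$ is orientable. Second, if $a$ preserved the quadrants at $w$, the argument would give $a(p)=p$ and hence $n=\pm1$. So when $|n|=2$, $a$ reverses orientations at $w$ and the orbit $\delta$ of $w$ is non-orientable.

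\textbf{Incorrect intermediate statements.}
\begin{itemize}
\item In Step~1 you say an involutive flip is excluded. But that is precisely the $|n|=2$ situation: near $w$, $a$ acts like a rotation by $\pi$, exchanging opposite lozenges and hence exchanging $p$ and $a(p)$. So Step~1 as written contradicts Step~2.
\item You claim every fixed point of $b$ gives an orbit freely homotopic to $\gamma^{\pm1}$. This fails for $w$ itself, whose orbit satisfies $\gamma\simeq\delta^2$.
\item A ``midpoint of a shared side'' cannot be fixed by $a$. Otherwise $a$ would preserve that leaf of $\Fs$, and $b=a^{\pm2}$ would have two fixed points on one leaf. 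That is impossible, since a weak stable leaf contains at most one periodic orbit.
\item $\tilde\gamma$ does not project injectively onto $\gamma$; it is a line covering $\gamma$. What you need is that $\gamma$ is simple, so that $b$ generates the stabilizer of $\tilde\gamma$.
\end{itemize}
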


This proposition follows from the analysis of the fundamental group action on the orbit space associated to the Anosov flow. One particularly useful structure in the orbit space $\O$ is the following:

\begin{definition}
A \emph{lozenge} is a region $L\subset\O$ bounded by two half-leaves $l^\mathrm{s}_z$ and $l^\mathrm{u}_z$ of $\Fs$ and $\Fu$ passing through a point $z$, and two half-leaves $l^\mathrm{s}_w$ and $l^\mathrm{u}_w$ passing through a point $w$ that are disjoint from $l^\mathrm{s}_z$ and $l^\mathrm{u}_z$, such that for every $p$ in the interior of $L$ the $\mathrm{s}$-leaf through $p$ intersects both $l^\mathrm{u}_z$ and $l^\mathrm{u}_w$ and the $\mathrm{u}$-leaf through $p$ intersects both $l^\mathrm{s}_z$ and $l^\mathrm{s}_w$. 
\end{definition}

The points $z$ and $w$ are called the \emph{corners} of the lozenge. The interior of the lozenge has a \emph{product structure} with respect to the $\mathrm{s}-$ and $\mathrm{u}$-foliations. Its closure in $\O$ is homeomorphic to a compact rectangle with two opposite corners missing. See details in \cite{Fenley1998StructureBranching}.

Observe that, for a skewed $\R$-covered Anosov flow, every point in the orbit space is the corner of a lozenge. In general, this needs not to be true, as we can see in the case of an $\R$-covered Anosov flow with global product structure.

\begin{definition}
    A \emph{chain of lozenges} is a sequence of lozenges $\{L_i\}_{i\in I}$ (where $I=\Z,\ \mathbb{N}$ or a finite set $\{1, \cdots, n\}$) such that, for every $i\in I$, the lozenges $L_j$ and $L_{j+1}$ share either a corner or a half-leaf in their boundaries. 
\end{definition}

An important property of the fundamental group action on the orbit space is that, if an element $a$ of the fundamental group fixes a point $z\in\O$, then $z$ represents the lift to the universal cover of a periodic orbit $\gamma$ and it is verified that $[a]=c(\gamma)$ (cf. \cite{Barbot1995Characterization}). We will need the following result:

\begin{theorem}[Fenley, \cite{Fenley1995QuasigeodesicAndPropertiesOfFlowlines}]\label{thm: Chain of Lozenges}
    If two points $z$ and $w$ in $\O$ are fixed under the action of a non-trivial element of $\pi_1(M,x_0)$, then $z$ and $w$ are connected by a finite chain of lozenges (i.e. each of them is a corner for a lozenge at the end of the chain).
\end{theorem}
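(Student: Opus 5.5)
This statement is quoted from \cite{Fenley1995QuasigeodesicAndPropertiesOfFlowlines}, so here I only outline how I would reconstruct the proof using the action of $\pi_1(M,x_0)$ on the bi-foliated plane $(\O,\Fs,\Fu)$.

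\textbf{Reductions and local dynamics.} Let $g\neq 1$ fix $z$ and $w$, with $z\neq w$. Replacing $g$ by $g^2$ does not change which points are fixed, since fixed points of $g^2$ are also lifts of periodic orbits. So I assume $g$ preserves every half-leaf at $z$ and at $w$. Since $z$ is the lift of a periodic orbit, $g$ acts on $\Fs(z)$ and $\Fu(z)$ with $z$ as its unique fixed point on each. Up to replacing $g$ by $g^{-1}$, $g$ contracts $\Fs(z)$ towards $z$ and expands $\Fu(z)$. The same holds at $w$, with the same direction. This is because $g$ represents a power of the same homotopy class, and the orbits are translated forward by $g$ in $\tilde M$ in both cases.

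\textbf{Key step: one lozenge at a time.} I show that some lozenge with corner $z$ points towards $w$, and that its other corner is again fixed by $g$.
\begin{enumerate}
\item Let $Q$ be the open quadrant at $z$ that contains $w$ or whose closure contains it. If $w$ lies on $\Fs(z)\cup\Fu(z)$, it would be a second fixed point on a leaf through $z$, which is impossible. So $w$ lies in the open quadrant $Q$.
\item Let $l^u_z$ be the boundary half-leaf of $Q$. Consider the set of points $p\in l^u_z$ whose stable leaf $\Fs(p)$ stays inside a product region between $z$ and $w$. Take the supremum point $p^*$, or the limiting leaf if the set is all of $l^u_z$.
\item The limiting configuration is a stable leaf $s$ that is either non-separated from another leaf or makes a perfect fit with $l^u_z$. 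This object is canonically determined by $z$, $w$ and the foliations, so $g(s)=s$.
\item A $g$-invariant leaf of $\Fs$ projects to a leaf of $\Fcs$ containing a closed orbit, so $s$ contains a point $z_1$ fixed by $g$. Doing the same on the unstable side, the perfect fits at $z$ and $z_1$ assemble into a lozenge with corners $z$ and $z_1$. Here I use the standard facts of \cite{Fenley1998StructureBranching} that perfect fits pair up into lozenges and that non-separated leaves are ``adjacent'' to lozenges.
\item If $w=z_1$, or $w$ lies on the far side of a shared boundary half-leaf, I am done. Otherwise I repeat the step from $z_1$, with $w$ lying in a strictly smaller region.
\end{enumerate}

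\textbf{Main obstacle: termination.} I expect the hardest part to be showing that this iteration ends after finitely many steps. I would fix a compact arc $\tau\subset\O$ from $z$ to $w$. Each new corner $z_i$ gives $g$-invariant leaves $\Fs(z_i)$ and $\Fu(z_i)$ that separate $z_{i-1}$ from $w$, so they cross $\tau$, and the $z_i$ are distinct. The corners $z_i$ project to closed orbits of $M$ that are all freely homotopic to each other. Their lifts fixed by $g$ cannot accumulate in $\O$: the leaves through $g$-fixed points are $g$-invariant, $g$ acts cocompactly near them, and an accumulation would force a point of $\tau$ where $g$ nearly fixes a leaf without fixing one, contradicting hyperbolicity. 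Hence only finitely many such leaves meet the compact arc $\tau$, and the chain from $z$ to $w$ is finite. If this compactness argument fails, I would instead use that a free homotopy class contains finitely many closed orbits of bounded length, together with the fact that a lozenge with $g$-fixed corners projects to freely homotopic orbits whose lengths are comparable to that of $z$.
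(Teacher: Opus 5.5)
The paper gives no proof of this statement; it quotes it from Fenley. So I am comparing your outline with Fenley's argument. Your construction step has the right overall shape: a $g$-invariant subset of $l^u_z$, a canonically chosen limit leaf, a $g$-fixed point on that leaf, and then a lozenge or a pair of adjacent lozenges via the perfect-fit and non-separation results of \cite{Fenley1998StructureBranching}. However, two of your claims are wrong.
\begin{itemize}
\item ``The same holds at $w$, with the same direction'' is false. If $z,w$ are the two corners of a single lozenge, $g$ contracts $\Fs(z)$ but expands $\Fs(w)$: it translates one corner orbit forward and the other backward. So the direction alternates along a chain, and your justification that the orbits are ``translated forward in both cases'' is exactly what fails.
\item In step~2 the endpoint $p^*$ can never occur. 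Define the set $g$-invariantly, for instance as the points $p\in l^u_z$ such that $\Fs(p)$ separates $z$ from $w$. Then an endpoint would be a second $g$-fixed point on $\Fu(z)$. This contradiction is what drives the construction: it forces the stable leaves to escape and produce a perfect fit or a non-separated limit. You should state it rather than list $p^*$ as a possible case.
\end{itemize}
Separately, squaring $g$ can create new fixed points (cf.\ Lemma \ref{lem:power<=2}). This is harmless, since you only need $z$ and $w$ to stay fixed, but the reason you give for it is incorrect.

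The genuine gap is termination; as written it is not an argument.
\begin{itemize}
\item The corners $z_i$ need not stay in any compact set; only their leaves meet $\tau$.
\item ``$g$ acts cocompactly near them'' has no content here.
\item The contradiction is not that $g$ ``nearly fixes a leaf without fixing one''; in the correct argument $g$ does fix the limit leaf.
\end{itemize}
What you must show is this: if infinitely many distinct $g$-invariant stable leaves $s_i$ meet $\tau$, then a limit leaf is itself $g$-invariant. This is where non-Hausdorffness matters. Taking $p_i\in s_i\cap\tau$ with $p_i\to p$ gives $s_i\to\Fs(p)$ and also $s_i=g(s_i)\to g(\Fs(p))$, and these may be distinct non-separated leaves.

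The fix goes as follows.
\begin{enumerate}
\item The invariant leaves separating $z$ from $w$ are linearly ordered. After passing to a monotone subsequence, exactly one limit leaf $s$ separates the $s_i$ from $w$ (or from $z$), or passes through it.
\item This leaf is canonical, hence $g$-invariant, hence contains a $g$-fixed point $x$.
\item Since $s_i\to s$ in the leaf space, the $s_i$ cross $\Fu(x)$ at points $y_i\to x$ with $y_i\neq x$.
\item Then $g(y_i)\in s_i\cap\Fu(x)=\{y_i\}$, so $y_i$ is a second fixed point of $g$ on $\Fu(x)$, a contradiction.
\end{enumerate}

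Your fallback cannot replace this. Free homotopy classes of periodic orbits can be infinite. For skewed $\R$-covered flows on hyperbolic manifolds, $g$ fixes every corner of a bi-infinite chain of lozenges, and these corners project to infinitely many distinct orbits. So periods along a chain are unbounded, and no comparison with the period of $z$ gives finiteness unless you already know the chain is finite.
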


Proposition \ref{prop_divisibility_per_orbits} is a consequence of the following lemma:

\begin{lemma}\label{lem:power<=2}
     Let $a$ be an element of $\pi_1(M,x_0)$ and $n\neq 0$ an integer. If the action of $a^n$ on $\O$ has a fixed point $z$, then $z$ is also fixed by $a^2$. Moreover, if the action of $a$ on $\O$ fixes the orientation of both foliations $\Fs$ and $\Fu$, then $z$ is also fixed by $a$. 
\end{lemma}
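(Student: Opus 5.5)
The idea is to apply Theorem \ref{thm: Chain of Lozenges} to the element $g=a^n$ and the two fixed points $z$ and $a(z)$. Since $a$ commutes with $a^n$, the point $a(z)$ is also fixed by $a^n$. If $a(z)=z$ there is nothing to prove, so assume $a(z)\neq z$. By Theorem \ref{thm: Chain of Lozenges}, $z$ and $a(z)$ are joined by a finite chain of lozenges. Moreover, the whole set $F$ of points fixed by $a^n$ is $a$-invariant.

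The first step is to describe the structure of $F$. Every point of $F$ lies in a chain of lozenges through $z$. Each corner is a hyperbolic fixed point of $a^n$: one of $a^{n}$, $a^{-n}$ contracts its stable leaf. The lozenges containing $z$ as a corner, with $a^n$-invariant sides, are at most four, one in each quadrant at $z$. Two adjacent lozenges in a chain either share a side or share a corner, and the configuration is rigid: the sides of the lozenges are $a^n$-invariant half-leaves. I would then use the standard facts from \cite{Fenley1998StructureBranching}:
\begin{itemize}
\item the connected union $\mathcal{C}$ of all lozenges with corners in $F$ is $a$-invariant;
\item the corners in $\mathcal{C}$ are linearly or tree-like ordered, with finitely many of them up to the action of the stabilizer;
\item the stabilizer of $\mathcal{C}$ is virtually $\Z$ or $\Z^2$ (the latter via a Seifert piece or a $\Z^2$ in the fundamental group).
\end{itemize}
For this paper's purposes I would rather argue directly. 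Since $a^n$ fixes every corner in the chain and preserves each half-leaf at those corners, $a$ acts on the finite combinatorial data of the chain: the corners and, at each corner, its four quadrants. The element $a$ maps $z$ to the corner $a(z)$. Because $\mathcal{C}$ is built of lozenges in which adjacent corners are related by a ``flip'' of quadrants, I would show that the action of $a$ on the set of corners in the chain is an automorphism of a graph (a tree or a line) on which $a^n$ acts trivially. A finite-order automorphism of a tree fixes a vertex or an edge, that is, a corner or a lozenge. Since the chain graph is locally finite and $a^n$ acts trivially, $a$ permutes the corners with finite orbits. The key point is that $a$ must preserve the ``center'' of the finite configuration spanned by $z$ and its $a$-orbit.

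From there I split into two cases. If $a$ fixes a corner $w$, then $a$ acts on the four quadrants at $w$ and preserves the chain. If $a$ fixed $w$ but moved $z\neq w$, then $a^n$ would fix both while $a$ rotates the path from $w$ to $z$. Since lozenge paths from $w$ leaving in a given quadrant are unique, and $a$ preserves the foliations, the action of $a$ on quadrants at $w$ is by an element of the Klein group $\Z/2\times\Z/2$ (it preserves $\Fs$ and $\Fu$, so it cannot rotate by $90^\circ$). Therefore $a^2$ fixes every quadrant at $w$, hence fixes the path to $z$ and so $z$. If instead $a$ fixes no corner, it swaps the two corners of an invariant lozenge, or swaps the two sides, and again $a^2$ fixes everything. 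In the orientation-preserving case the Klein-group element is trivial, because it preserves the orientations of both foliations. Hence $a$ fixes all quadrants at $w$, hence fixes $z$. Similarly, $a$ cannot swap the corners of a lozenge while preserving both orientations, since such a swap reverses them.

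I expect the main obstacle to be the combinatorial claim that $a$ must fix some corner or some lozenge of the chain and act there with order dividing $2$. This is a fixed-point statement for a finite-order action on the tree of lozenges, together with the uniqueness of chains of lozenges between two corners. I would try to get it from uniqueness of minimal chains (no backtracking), which makes the chain set $F$ a tree on which $a$ acts with $a^n$ trivial.
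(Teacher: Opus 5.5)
Your proposal has a genuine gap, and it is the step you flag yourself: producing a corner fixed by $a$, or an $a$-invariant lozenge. You get it from a fixed-point theorem for a finite-order automorphism of a ``tree of lozenges'' on $F=\mathrm{Fix}(a^n)$, but that structure is never established.

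Theorem~\ref{thm: Chain of Lozenges}, as available here, only gives \emph{some} finite chain between two fixed points of $a^n$. It does not say that the intermediate corners lie in $F$, that minimal chains are unique, or that the lozenge graph has no cycles. You need all three for $F$ to be the vertex set of a tree on which $a^n$ acts trivially.

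Your claim that $a^n$ preserves every half-leaf at its fixed points is also false in general. If $n$ is odd and $a$ reverses the orientations of $\Fs$ and $\Fu$, then so does $a^n$, and it swaps opposite quadrants at $z$; that case needs a separate argument you do not give. The same unproved uniqueness is used again in your step ``lozenge paths from $w$ leaving in a given quadrant are unique.'' As written, the key step is a hope rather than an argument.

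The missing idea is Brouwer's theorem: an orientation-preserving homeomorphism of the plane with a periodic point has a fixed point. This is how the paper proceeds.
\begin{enumerate}
\item Since $M$ is orientable and deck transformations preserve the direction of the lifted flow, $a$ acts on $\O\cong\R^2$ preserving orientation. Since $z$ is $a$-periodic, there is $w$ with $a(w)=w$ directly.
\item Apply Theorem~\ref{thm: Chain of Lozenges} to $w$ and $z$, both fixed by $a^n$.
\item Orientation preservation also shows that $a$ either preserves both foliation orientations or reverses both, so $a^2$ preserves both. Hence at any $a^2$-fixed corner, $a^2$ fixes all four half-leaves, so it fixes every lozenge with that corner and hence its opposite corner.
\item Inducting along the chain from $w$ gives $a^2(z)=z$, with no uniqueness of chains needed. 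Running the same induction with $a$ in place of $a^2$ gives the second assertion.
\end{enumerate}
Your Klein-group analysis at $w$ is essentially this last step. If you could prove the tree structure, your route would avoid Brouwer's theorem, but as it stands that structure is the whole difficulty.
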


\begin{figure}[t]
    \centering
    \begin{overpic}[width=10 cm]{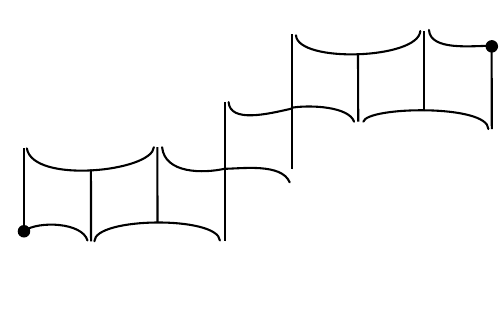}
        \put(2,15){$w$}
        \put(8,25){$L_1$}
        \put(20,25){$L_2$}
        \put(88,50){$L_n$}
        \put(98,60){$z$}
    \end{overpic}
    \caption{A chain of lozenges connecting the fixed point $w$ of $a$ and the fixed point $z$ of $a^2$.}
    \label{fig:chain}
\end{figure}

\begin{proof}
Since we assume the 3-manifold $M$ is orientable, then every $a\in\pi_1(M,x_0)$ acts preserving the orientation of the plane $\O$. To see this, recall that by \cite{Fenley1994AnosovFlows, Barbot1995Characterization} the universal cover of $M$ endowed with the lift of the foliation by flow-orbits is isomorphic to $\R^3$ endowed with the foliation by vertical lines $\{\text{point}\}\times\R$. Observe that the orbits of the flow are naturally oriented by the flow-direction, and hence their lift to the universal cover is an oriented foliation. Since $a$ acts on the universal cover preserving its orientation and also preserving the orientation of the lift of the foliation by flow-orbits, then it also preserves the orientation of every transverse plane $\R^2\times\{\text{point}\}$. Taking the quotient by vertical lines, this induces an orientation preserving homeomorphism of the orbit space. In practice, this translates into the following property: The action of $a$ on $\O$ either preserves the orientation of both foliations $\Fs$ and $\Fu$, or either reverses both.

Let $n\neq 0$ be an integer and let $z\in\O$ be a point satisfying $a^n(z)=z$. A classical theorem of Brower about plane homeomorphisms states that, if an orientation preserving homeomorphism of the plane has a periodic point, then it has a fixed point (see \cite{Franks_BrowerThm} for a proof). Applying this to our context we obtain that there exists a point $w\in\O$ such that $a(w)=w$. If $z=w$ the lemma follows directly, so assume $z\neq w$. Since $z$ and $w$ are both fixed by $a^n$, Theorem \ref{thm: Chain of Lozenges} implies that these two points are connected by a chain of lozenges $\mathcal{C}=\{L_1,\dots,L_n\}$.

Consider the action of $a^2$ on $\mathcal{C}$. Since $a^2$ fixes $w$ and preserves the orientations of $\Fs$ and $\Fu$, then $a^2$ fixes the sides of the lozenge $L_1$ containing $w$, which implies that it fixes the entire lozenge $L_1$. Since $a^2$ fixes the sides of $L_1$ that contains the corner opposite to $w$, it fixes the sides of the lozenge $L_2$ adjacent to $L_1$, and hence the entire lozenge $L_2$. (There are two cases to consider here: Either $L_1$ and $L_2$ share a corner or share an entire side.) By induction, $a^2$ fixes all lozenges $L_1\dots L_n$ connecting $w$ with $z$. Hence $a^2(z)=z$, as required.

Finally, if $a$ preserves the orientation of the stable/unstable leaves containing $w$, then $a$ fixes all the lozenges in the chain connecting $w$ with $z$, and hence $z$ is fixed by $a$.
\end{proof}

\begin{proof}[Proof of Proposition \ref{prop_divisibility_per_orbits}]
Let $\gamma$ be a periodic orbit and let $b\in\pi_1(M,x_0)$ such that $c(\gamma)=[b]$. This element $b$ is represented by a closed curve $\beta=\epsilon\cdot\gamma\cdot\epsilon^{-1}$ in the 3-manifold $M$, where $\epsilon$ is an arc joining the base point $x_0$ with a point $y_0$ in the simple closed curve $\gamma$. Choose a lift $\tilde{x}_0\in\tilde{M}$ of the base point $x_0$ and consider the action of $\pi_1(M,x_0)$ on $\tilde{M}$ obtained by lifting closed curves with starting point at $\tilde{x}_0$. Let $\tilde{\epsilon}:[0,1]\to\tilde{M}$ be the lift starting at $\tilde{\epsilon}(0)=\tilde{x}_0$ and denote $\tilde{y}_0=\tilde{\epsilon}(1)$. By construction, the point $\tilde{y}_0$ belongs to a component $\tilde{\gamma}_0$ of the lift of $\gamma$ to $\tilde{M}$. Then, the action of $b$ on the point $\tilde{y}_0$ is, by definition of the fundamental group action, the point 
$\tilde{y}_1=(\beta\cdot\epsilon)^{\sim}(1)=(\epsilon\cdot\gamma)^{\sim}(1)$, which is easily seen to belong to $\tilde{\gamma}_0$. Therefore, $b$ fixes the component $\tilde{\gamma}_0$ and hence it fixes the associated point $z=(\text{class of }\tilde{\gamma}_0$) in $\O=\tilde{M}/\text{orbit foliation}$.

Consider the equation $b=a^n$ and let $\alpha:[0,1]\to M$ be a closed curve with $\alpha(0)=\alpha(1)=x_0$ representing the element $a$. By Lemma \ref{lem:power<=2} we have that, when acting on the orbit space, either $a(z)=z$, or $a(z)\neq z$ and $a^2(z)=z$.

If $a$ fixes $z$, consider the action of $a$ on the point $\tilde{y}_0$. By definition of the fundamental group action, this is the point 
$\tilde{y}_2=(\alpha\cdot\epsilon)^{\sim}(1)$. Since $a$ fixes the component $\tilde{\gamma}_0$, there exists a segment $\tilde{\zeta}$ joining $\tilde{y}_0$ with $\tilde{y}_2$ contained in the line $\tilde{\gamma}_0$, and the concatenation $\tilde\alpha\cdot\tilde\epsilon\cdot\tilde{\zeta}^{-1}\cdot\tilde\epsilon^{-1}$ is a closed curve in the universal cover, so it is homotopically trivial. Projecting on the 3-manifold $M$, we obtain that $\alpha$ is homotopic to $\epsilon\cdot\zeta\cdot\epsilon^{-1}$. Since $\zeta$ is a curve parametrizing $\gamma$ a number of times $m\neq 0$, this implies that $\alpha$ is homotopic to $\beta^m$, and hence $b=b^{nm}$. Finally, since no non-trivial power the periodic orbit $\gamma$ can be null-homotopic, we obtain that $nm=\pm 1$, from where $n=\pm 1$. Therefore, $b=a^{\pm 1}$.

If $a(z)\neq z$ and $a^2(z)=z$, we can apply the same argument as in the previous paragraph replacing the role of $\alpha$ by $\alpha^2$, and obtain that $b=a^{\pm 2}$. In this case, Lemma \ref{lem:power<=2} guarantees the existence of another point $w\in\O$ such that $a(w)=w$, where the action of $a$ necessarily switches the orientations of the leaves of the foliations $\Fs$ and $\Fu$. It follows that there exists a periodic orbit $\delta$ satisfying $c(\delta)=[a]$, but whose invariant manifolds are non-orientable. Finally, since $\delta^2$ and $\gamma$ define the same conjugacy class in $\pi_1(M,x_0)$, these two closed curves are freely homotopic inside $M$. 
\end{proof}

\begin{remark}
    A special case of Proposition \ref{prop_divisibility_per_orbits} is the case of geodesic flows $\phi_t:T^1\Sigma\to T^1\Sigma$ defined on non-orientable hyperbolic surfaces $\Sigma$. For every periodic orbit $\gamma$ of such a flow, Proposition \ref{prop_divisibility_per_orbits} says that either $c(\gamma)=[a^{\pm1}]$ or $\gamma=[a^{\pm2}]$, where $a$ is a non-divisible element of $\pi_1(T^1\Sigma,x_0)$. Let's see that, in fact, it holds $c(\gamma)=[a^{\pm1}]$ for every periodic orbit $\gamma$. For this, assume $c(\gamma)=[a^{\pm2}]$ for some $a$ in the fundamental group. Then, by Proposition \ref{prop_divisibility_per_orbits} there exists another periodic orbit $\delta$ such that $c(\delta)=[a]$ and $\delta^2$ is freely homotopic to $\gamma$. Now, if we project these two orbits of the geodesic flow on the surface $\Sigma$, we obtain two closed geodesics $g_\delta$ and $g_\gamma$  such that $g_\gamma$ is homotopic to the geodesic $g_\delta$ traversed twice. Since in a hyperbolic surface $\Sigma$ there is at most one closed geodesic per free homotopy class, then $g_\delta^2=g_\gamma$ and we conclude that the orbit $\gamma$ is equal to the orbit $\delta$ traversed twice. Since every periodic orbit is a simple closed curve (i.e. injective parametrization $\gamma:[0,1]\to M$), we obtain a contradiction from the assumption $c(\gamma)=[a^2]$. 
\end{remark}

\section{Hyperbolicity of periodic orbits}
\label{sec:hyperbolicity}

In this section we consider the problem of determining whether a periodic orbit of an Anosov flow is a hyperbolic knot in the supporting 3-manifold.

First, note that we cannot expect the complement of a closed orbit of an Anosov flow to be always hyperbolic. For example, consider the geodesic flow $\phi_t:T^1 \Sigma\to T^1 \Sigma$ on the unitary tangent bundle of a closed hyperbolic surface $\Sigma$. If $\gamma$ is a periodic orbit corresponding to a non-filling closed geodesic, there exists a simple closed geodesic $\kappa$ in $\Sigma$ such that the projection of $\gamma$ on $\Sigma$ does not intersect $\kappa$, and then the set $T^1_\kappa \Sigma$ consisting of all unitary vectors in $\Sigma$ with base point in $\kappa$ is an essential torus embedded in $T^1\Sigma\setminus\gamma$. Hence, $T^1 \Sigma\setminus\gamma$ cannot be hyperbolic.

\begin{definition}\label{def:filling orbit}
    Let $\gamma$ be a simple closed curve in an irreducible 3-manifold $M$. We say that $\gamma$ is \emph{filling} if it has nonempty intersection with every essential torus or essential Klein bottle in $M$. 
\end{definition}

\begin{remark}
    In the case of orientable 3-manifolds, in order to prove that a simple closed curve is filling it is enough to show that it intersects non-trivially every essential torus. This is because, in an orientable 3-manifold, the boundary of a tubular neighborhood of an essential Klein bottle is an essential torus. Hence, if a simple closed curve has non-empty intersection with every essential torus, in particular it has non-empty intersection with the boundary of every small tubular neighborhood of an essential Klein bottle, and this forces an intersection with the Klein bottle. 
\end{remark}

We will prove that the periodic orbits of a transitive Anosov flow which are filling and non-divisible are hyperbolic knots. Let us show first that any transitive Anosov flow has infinitely many of these orbits.

\begin{proposition}
    Every transitive Anosov flow has infinitely many filling and non-divisible periodic orbits.
\end{proposition}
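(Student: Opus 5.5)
Since $M$ is orientable (or we pass to the orientation cover), it suffices by the Remark after Definition~\ref{def:filling orbit} to produce infinitely many periodic orbits that meet every essential torus and are non-divisible. The plan is to handle the two conditions separately and then combine them with the shadowing/specification property of transitive Anosov flows.

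\textbf{Step 1: a finite set of orbits that meets every essential torus.} By the JSJ decomposition, $M$ has finitely many isotopy classes of essential tori to track, namely the JSJ tori together with vertical tori in Seifert pieces. A result I would invoke (Barbot, Fenley) says that any incompressible torus in a manifold with an Anosov flow can be isotoped to be either transverse to the flow, or to be a weakly embedded quasi-transverse torus containing finitely many periodic orbits. In the transverse case, every sufficiently dense orbit crosses the torus. In the quasi-transverse case, the torus is a union of Birkhoff annuli whose boundary orbits bound chains of lozenges. A periodic orbit that passes through the interior of a Birkhoff annulus crosses the torus, and, since a transitive flow has dense periodic orbits, such orbits exist.

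A single orbit can only be checked against finitely many isotopy classes, however, and there may be infinitely many essential tori. I would therefore use the stronger fact that a periodic orbit which is \emph{$\varepsilon$-dense} in $M$, for $\varepsilon$ small, cannot be disjoint from an essential torus. If it were, it would lie in the complement, whose pieces have restricted $\pi_1$ (for instance, isotope the torus into a standard position relative to the flow). This is the step I expect to be the main obstacle. The cleanest route is probably to argue via the orbit space: an orbit disjoint from an essential torus $T$ has every lift disjoint from the lifted plane $\tilde T$. Meanwhile $\tilde T$ is quasi-transverse or transverse, and there are orbits crossing every such configuration. One then needs uniformity over the infinitely many tori, for example from the finiteness of Seifert/JSJ pieces up to isotopy and Dehn twists.

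\textbf{Step 2: non-divisibility.} By Proposition~\ref{prop_divisibility_per_orbits}, a divisible orbit $\gamma$ has orientable weak manifolds and is freely homotopic to $\delta^2$ for an orbit $\delta$ with non-orientable weak manifolds. If the foliations are orientable, every orbit is non-divisible by Lemma~\ref{lem:power<=2}. Otherwise, choose orbits with non-orientable weak leaves. A single such orbit is non-divisible, since Proposition~\ref{prop_divisibility_per_orbits} forces divisible orbits to have orientable leaves. To obtain infinitely many of them, use the Anosov closing lemma to shadow a pseudo-orbit that goes once around a fixed orbit $\delta_0$ with non-orientable leaves and then traverses a long dense orbit segment. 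The orientation character along the resulting orbit is the product of the characters along the pieces. Adjusting the dense segment (for example, going around an extra orientation-reversing loop when needed) makes the total character orientation-reversing, which gives non-orientable leaves.

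\textbf{Step 3: combine.} For each $n$, use the specification property to build a periodic orbit $\gamma_n$ that is $1/n$-dense, so it is filling by Step 1, and whose leaves are non-orientable, or which is arbitrary in the orientable case, so it is non-divisible by Step 2. The periods tend to infinity, so the $\gamma_n$ give infinitely many distinct orbits.
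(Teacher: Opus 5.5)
Your proposal has a genuine gap in Step 1. That step carries all the content of the filling claim, and you leave it unproved. Two ingredients are missing.

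\textbf{Essential intersection with a single torus.} Even for one essential torus $T$, knowing that a dense orbit crosses a transverse or Birkhoff representative $T_0$ does not show that $\gamma$ meets every embedded torus isotopic to $T$. Your remark that otherwise $\gamma$ ``would lie in the complement, whose pieces have restricted $\pi_1$'' only restates the problem. $\varepsilon$-density is a metric property of one curve and says nothing by itself about whether $\gamma$ can be homotoped off $T$.
\begin{itemize}
\item For a transverse $T_0$, you need that all crossings have the same sign. Then the algebraic intersection number is nonzero and hence homotopy invariant.
\item For a Birkhoff torus, the algebraic intersection can vanish (e.g.\ $T^1_cS$ for a separating $c$ in a geodesic flow). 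What is needed is the efficient intersection property the paper quotes from Barbot--Fenley: a lift $\tilde\gamma$ not contained in $\tilde T_0$ meets it in at most one point, because weak stable and unstable leaves in $\widetilde M$ meet $\tilde T_0$ in connected sets. The ends of $\tilde\gamma$ then go off on opposite sides of $\tilde T_0$, so $\tilde\gamma$ must cross the lift of any torus homotopic to $T_0$.
\end{itemize}

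\textbf{Uniformity over infinitely many tori.} More seriously, you have no mechanism giving uniformity over the infinitely many isotopy classes of vertical tori in the Seifert pieces. Finiteness ``up to Dehn twists'' does not help. The homeomorphisms relating different vertical tori preserve neither the flow nor the metric, so knowing that an $\varepsilon$-dense orbit crosses $T_c$ tells you nothing about $T_{f(c)}$.

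The paper supplies precisely this finiteness. It reduces filling to two conditions: meeting each JSJ torus, and, in each Seifert piece, having filling projection to the base orbifold. It then fixes a train track carrying all essential simple closed curves of the base. For each of the finitely many branches it picks an open set of points whose orbits cross that branch roughly perpendicularly, and for each JSJ torus it picks open sets crossing it inward and outward. An orbit through all of these finitely many open sets crosses the vertical torus over every carried curve, and efficient intersection makes these crossings essential. Once this is in place, your $\varepsilon$-density is a legitimate substitute (take $\varepsilon$ below the size of those open sets), and Step 3 goes through.

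Your Step 2 is sound. It is in fact more explicit than the paper's sketch, which only arranges that the orbits are not powers of one another and does not explicitly address divisibility in $\pi_1(M)$. You should add why some periodic orbit $\delta_0$ with non-orientable leaves exists when the foliations are non-orientable, for instance by closing up a return of a dense orbit along which the orientation of $E^{\mathrm{s}}$ flips.
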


\begin{proof}
We only need to check intersection with essential tori and Klein bottles. We do the proofs for tori, and similar proof works for Klein bottles.
 Given an orbit $\gamma$, then $\gamma$ being filling is equivalent to $\gamma$ intersecting any torus of the JSJ decomposition, and any vertical torus in a Seifert fibered piece. i.e., it's projection to the orbit surface of any such piece is filling on the surface.

   First notice that for any Birkhoff torus, an orbit intersects it efficiently:  in other words no segment in $\gamma$ with endpoints
   in the torus can be homotopically pushed into the torus unless the
   whole orbit is contained in the torus. This is better seen in the
   universal cover: if $\tilde T$ is the lift of the torus and
   $\tilde \beta$ is the lift of an orbit $\beta$, then either 
   $\tilde \beta$ is contained in $\tilde T$ or $\tilde \beta$ is 
   intersects $\tilde T$ in a single point. This is because stable and
   unstable leaves in the universal cover intersect $\tilde T$ in connected sets. \cite{BarbotFenley2021Free}.
   
   Then for each torus of the JSJ decomposition choose an open set of directions pointing out, and another open set with a range of directions going in.

   For a Seifert fibered piece, it is enough to look at intersection
   with essential embedded tori in the piece. Hence look at the base surface. Fix a train track carrying all simple closed curves on the orbit space/orbifold, and for each edge of the train track choose on open set in the unit tangent bundle going through the midpoint of each edge with roughly perpendicular direction. 

   Choose a dense orbit, follow it for a finite time until it passes through all these open sets and back to close to where it started. Find a closed orbit approximated by this dense segment using the Anosov closing lemma. This orbit intersects all essential tori, and all tori that are the tangent bundles to simple closed curves on the orbit surfaces. By the  claim above these intersections cannot be removed by isotopy/homotopy, and thus the orbit is filling.

   Using more and more segments of dense orbits returning to the same first open neighborhood, passing the open neighborhood in different orders (to ensure we do not get powers of the same orbit), yields there are infinitely many such orbits.
\end{proof}

In preparation for the proof of Theorem \ref{thm: orbits are hyperbolic for orientable foliations} we first  describe how a periodic orbit of an Anosov flow looks-like when contained inside a solid torus.

\begin{lemma}\label{lem:isotopic to core}
Let $\gamma$ be a periodic orbit of an Anosov flow $\phi$ on a closed 3-manifold $M$. If $\gamma$ is contained in a solid torus and is homotopic to a core $\alpha$ of the solid torus, then $\gamma$ and $\alpha$ are isotopic.
\end{lemma}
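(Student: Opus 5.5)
Let $V\subset M$ be the solid torus, with $\gamma\subset V$ and $\gamma$ homotopic in $V$ to the core $\alpha$. The idea is to pass to the cover of $M$ determined by $\pi_1(V)$ and to use that, in this cover, the stable leaf through the lift of $\gamma$ is an annulus or a M\"obius band bounded by that lift. This is the same philosophy as Lemma \ref{lem:prime}: the weak stable leaf of $\gamma$ supplies the isotopy.

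First, the homotopy takes place inside $V$, and $\pi_1(V)\cong\Z$ is generated by $\alpha$. Consider the covering $p\colon \hat M\to M$ corresponding to the image of $\pi_1(V)$ in $\pi_1(M)$. Since periodic orbits are never null-homotopic (Section \ref{sec_preliminaries}), $\alpha$ is not null-homotopic in $M$, so this image is infinite cyclic. Thus $\hat M=\tilde M/\langle a\rangle$, where $a$ is the deck transformation represented by $\gamma$. The solid torus $V$ lifts homeomorphically to $\hat V\subset \hat M$, and $\gamma$ lifts to a closed orbit $\hat\gamma\subset\hat V$ of the lifted flow.

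In $\hat M$, take the lift $\hat L$ of a weak stable leaf through $\hat \gamma$. It is $\tilde L/\langle a\rangle$, hence an annulus or a M\"obius band with core $\hat\gamma$. Now $\hat M\cong \R^3/\langle a\rangle$, because $\tilde M\cong\R^3$ and $a$ acts freely. I would identify $\hat M$ with an open solid torus (or a twisted $\R$-bundle over a circle). One way is to use the product structure $\tilde M\cong \O\times\R$ from \cite{Fenley1994AnosovFlows, Barbot1995Characterization}: $a$ acts on $\O$ with a fixed point $z$, and $\hat M$ fibers over $\O/\langle a\rangle$ suitably. A more robust way is to invoke the fact that $\hat M$ is an irreducible open manifold with cyclic fundamental group. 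The embedding $\hat V\hookrightarrow \hat M$ is a $\pi_1$-isomorphism. Since $\hat M$ deformation retracts (after Tameness, or directly via the product structure) onto a neighborhood of $\hat\gamma$, I would show that $\hat\gamma$ is isotopic in $\hat M$ to the core of $\hat M$. The flow provides this concretely: flowing along the strong stable direction within $\hat L$, $\hat\gamma$ is the core of the invariant annulus, and a small tubular neighborhood $N(\hat\gamma)$ is a solid torus onto which $\hat M$ retracts.

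The key step, and the main obstacle, is to get the isotopy \emph{inside $V$} rather than merely in $M$ or $\hat M$. My plan is the following. The region $\hat V\setminus \mathrm{int}\,N(\hat\gamma)$ lies in $\hat M\setminus \hat\gamma$. I would show that $\hat V\setminus \mathrm{int}\,N(\hat\gamma)$ has incompressible boundary components and that $\pi_1$ surjects appropriately, so that it is $T^2\times I$. Then, by the standard characterization, a knot in a solid torus whose exterior is $T^2\times I$ is a core, which gives the isotopy between $\gamma$ and $\alpha$ in $V$, and projecting to $M$ is harmless.

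For $T^2\times I$ I would argue via compressing disks. The torus $\partial N(\hat\gamma)$ is incompressible in $\hat M\setminus\hat\gamma$ away from its meridian, because $\hat\gamma$ generates $\pi_1$ and is not a proper power (it is primitive in $\pi_1(\hat M)=\langle a\rangle$). Then $\partial \hat V$ is compressible only in $\hat V$ itself. An innermost-disk argument using the disks swept out by $\hat L\cap \hat V$, in the manner of the proof of Lemma \ref{lem:prime} and Remark \ref{rem:several_arcs}, shows that a meridian disk of $\hat V$ can be chosen to meet $\hat\gamma$ in exactly one point. A solid torus with a meridian disk hit once by $\hat\gamma$ has $\hat\gamma$ isotopic to its core; this is the classical lemma, since cutting along the disk gives a ball with an arc that is unknotted, by Lemma \ref{lem:prime} and Remark \ref{rem:several_arcs}. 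I expect arranging the single intersection to be the delicate part. There I would use that the homotopy class equals the generator, so the algebraic intersection number is $1$, and I would cancel pairs of intersections of opposite sign using the annulus $\hat L$.
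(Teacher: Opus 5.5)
Your setup is sound. You pass to $\hat M=\tilde M/\langle a\rangle$ and lift $V$ homeomorphically to $\hat V$, so an isotopy inside $\hat V$ projects to one inside $V$. You also correctly note that $\hat L$ is a properly embedded annulus or M\"obius band with core $\hat\gamma$. Your endgame is fine too: if some meridian disk of $V$ meets $\gamma$ transversally once, the sphere obtained by cutting $V$ along it meets $\gamma$ twice, and Lemma~\ref{lem:prime} unknots the remaining arc. The gap is the step in between, which is the entire content of the lemma. You never say how a meridian disk meeting $\hat\gamma$ once is obtained, only that you would ``cancel pairs of intersections of opposite sign using the annulus $\hat L$''. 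Algebraic intersection number one plus primeness cannot do this. A winding-number-one pattern of wrapping number greater than one, such as the Mazur pattern, is homotopic to the core, has no local knots, and is not isotopic to the core. The same example defeats your $T^2\times I$ route: its exterior in $V$ is also irreducible with both boundary tori incompressible. Finally, a deformation retraction of $\hat M$ onto $N(\hat\gamma)$ is a homotopy statement and gives no isotopy.

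The missing idea, which is how the paper uses the leaf, is to study the component of $\hat L\cap\hat V$ through $\hat\gamma$ rather than a meridian disk. Put $\hat L$ transverse to $\partial\hat V$ and remove, by an isotopy supported away from $\hat\gamma$, every curve of $\hat L\cap\partial\hat V$ that is inessential in $\hat L$. An innermost such curve bounds a disk $E\subset\hat L$ missing $\hat\gamma$, so it is null-homotopic in $\hat M$. On $\partial\hat V$ it is therefore either inessential or a meridian. It cannot be a meridian: either $E$ would be a meridian disk missing $\hat\gamma$, or $E$ plus a meridian disk would be a sphere meeting $\hat\gamma$ algebraically once, impossible since $H_2(\hat M)=0$. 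Hence it bounds a disk in $\partial\hat V$, and irreducibility lets you push it off. When $\hat L$ is an annulus, properness of $\hat L$ and compactness of $\hat V$ then make the component through $\hat\gamma$ an annulus with core $\hat\gamma$ and both boundary curves on $\partial\hat V$. Half of it is an isotopy in $V$ from $\gamma$ to a simple closed curve $\gamma'\subset\partial V$ homotopic in $V$ to the core, i.e.\ a $(1,n)$ curve. Such a curve meets a meridian disk once and pushes radially onto the core, so Lemma~\ref{lem:prime} is not even needed. When $\hat L$ is a M\"obius band, the component is a M\"obius band bounded by a $(2,q)$ curve, and sliding along it does not reach $\partial\hat V$. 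In that case, the frontier of its regular neighborhood $N$ is an incompressible, hence boundary-parallel, annulus in $V$. It cannot be parallel through $N$, where it winds twice, so $N$ is a shrunken copy of $V$ and its core $\gamma$ is isotopic to the core of $V$. (The paper's written proof calls this component an annulus and does not treat the M\"obius case separately.)
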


\begin{proof}
Let $V$ be the solid torus and $T$ its boundary. They will be fixed
throughout the proof. Consider the invariant manifold $L^s(\gamma) \cap V$ containing $\gamma$. Lifting to the universal cover $\tilde M$, $L^s(\gamma)$ lifts to a plane $\tilde L$, through which $\tilde\gamma$ is an infinite line, and it has at least two infinite components of the intersection with the infinite cylinder $\tilde T$, one on either side of $\gamma$.

 Isotope $\tilde L$ equivariantly in the complement of $\tilde\gamma$ so that it is transverse to $\tilde T$. Then, using that $M$ is irreducible, isotope it so that all
intersections with $T$ are not null homotopic. 
It follows that there is an annulus $S$, the quotient of the innermost component of $L_1\setminus \tilde T$ embedded in $V$ with core $\gamma$.

This annulus defines an isotopy from $\gamma$ to a curve $\gamma'\subset T$.  
Let $D$ be a meridian disk in $V$, well defined up to isotopy.
By the assumption that $\gamma$ and therefore $\gamma'$ is homotopic to the core of $V$, the arcs of $\gamma'\setminus D$ can be removed, innermost first, by an isotopy of $\gamma'$ within $T$, so that the resulting curve $\gamma''$ has a single intersection with $D$. By isotoping $\gamma''$ further, we may assume it is monotone with respect to the fibration of $T$ by curves parallel to $\partial D$.
Observe that, in an homology basis $\{l,m\}$ of $H_1(T)$ where $m=[\partial D]$ and $l$ is the class of a closed loop intersecting $\partial D$ only once, we have that $[\gamma"]=(1,n)$ for some $n\in\Z$. Hence, $\gamma''$ is isotopic to a core $\alpha$ of the solid torus $T$ by sliding it inwards the solid torus $V$ along meridians disks. Therefore, concatenating the all isotopies 
$$\gamma\to\gamma'\to\gamma ''\to\alpha$$ 
we conclude that $\gamma$ is isotopic to $\alpha$, as required.
\end{proof}

We are finally ready to prove Theorem \ref{thm: orbits are hyperbolic for orientable foliations}. The proof is divided into four lemmas, analyzing whether the complement a filling periodic orbit contains no essential spheres, disks, tori or annuli. 

Given a simple closed curve $\gamma$ in a 3-manifold $M$, we denote by $\mg=M\setminus\mathrm{int}(W)$ the manifold obtained by removing the interior of a small tubular neighborhood of $W$ around $\gamma$. 

\begin{lemma}\label{lem:irreducible}
    Let $\gamma$ be a periodic orbit of an Anosov flow $\phi$ on a closed 3-manifold $M$. Then the 3-manifold $\mg$ is irreducible.
\end{lemma}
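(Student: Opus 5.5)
We must show that every embedded sphere $S\subset \mg$ bounds a ball in $\mg$. The plan is to use two facts: $M$ is irreducible, as noted in the proof of Lemma \ref{lem:prime}, and $\gamma$ is not null-homotopic in $M$, as recalled in Section \ref{sec_preliminaries}.

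Let $S\subset\mg\subset M$ be an embedded sphere. First, since $M$ is irreducible, $S$ bounds a ball $B\subset M$. Because $S$ is disjoint from $\gamma$ and $\gamma$ is connected, $\gamma$ lies either entirely inside $B$ or entirely in $M\setminus B$. Second, suppose $\gamma\subset B$. Then $\gamma$ is contractible in $M$, since $B$ is a ball. This is impossible: no nontrivial power of a periodic orbit is null-homotopic, by Novikov's theorem as explained in the preliminaries. Hence $\gamma\cap B=\emptyset$.

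It remains to check that $B$ lies in $\mg$ and not merely in $M\setminus\gamma$. The tubular neighborhood $W$ is a connected solid torus containing $\gamma$, and it is disjoint from $S=\partial B$ because $S\subset\mg$. So $W$ is contained in one side of $S$. Since $W\supset\gamma$ and $\gamma$ lies outside $B$, we get $W\cap B=\emptyset$, and therefore $B\subset\mg$. So $S$ bounds a ball in $\mg$, which proves that $\mg$ is irreducible.

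There is essentially no obstacle; all the real work is in the two inputs. Irreducibility of $M$ comes from the universal cover being $\R^3$ (Palmeira) together with the standard argument of Rosenberg and Novikov. The homotopic nontriviality of $\gamma$ comes from Novikov's theorem. The only point needing care is the case where $S$ separates with $\gamma$ on the "ball side", and this is exactly what the non-contractibility of $\gamma$ rules out.
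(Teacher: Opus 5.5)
Your proposal is correct and takes essentially the same route as the paper's proof. By irreducibility of $M$ the sphere $S$ bounds a ball $B\subset M$, and $\gamma$ cannot lie in $B$ because a periodic orbit is never null-homotopic. Your extra check that the tubular neighborhood $W$ also lies outside $B$ is a detail the paper leaves implicit.
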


\begin{proof}
Let $S$ be an embedded sphere in $\mg$. We need to prove that $S$ bounds a ball on one of its sides. Observe that $S$ can be viewed as a sphere in $M$ which does not intersect $\gamma$. Since a 3-manifold carrying an Anosov flow is always irreducible then $S$ bounds a ball $B$ in $M$. If the ball $B$ does not intersect $\gamma$ then $S$ also bounds a ball in $\mg$ and the result follows. If not, since $\gamma$ does not intersect $S$ then $\gamma$ must be contained in $B$, and this implies that $\gamma$ is homotopically trivial, which is not possible for a periodic orbit of an Anosov flow (cf. Section \ref{sec_preliminaries}).
\end{proof}

\begin{lemma}\label{lem:atoroidal}
    Let $\gamma$ be a filling periodic orbit of an Anosov flow $\phi$ with orientable foliations on a 3-manifold $M$. Then the complement $\mg$ is atoroidal, i.e. every embedded two dimensional torus is either compressible or boundary parallel.
\end{lemma}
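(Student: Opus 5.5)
Let $T\subset \mg$ be an embedded incompressible torus; we want to show it is boundary parallel. View $T$ as a torus in $M$ disjoint from $\gamma$, and split into cases according to whether $T$ is compressible in $M$.

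\emph{Case 1: $T$ is incompressible in $M$.} Then $T$ is an essential torus in $M$ (since $M$ is irreducible, by Lemma \ref{lem:irreducible}'s argument $M$ itself is irreducible). Because $\gamma$ is filling, $\gamma\cap T\neq\emptyset$, contradicting $T\subset\mg$. So this case cannot occur.

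\emph{Case 2: $T$ compresses in $M$.} Since $M$ is irreducible, a compressible torus either lies in a ball or bounds a solid torus $V$ in $M$. The usual argument: compress $T$ along a disk $D$ in $M$ to get a sphere $S$, which bounds a ball $B$ by irreducibility. Either $T$ sits inside a ball (then $T$ is the boundary of a knot exterior in a ball), or $T$ bounds a solid torus $V$ on one side. Since $T$ is incompressible in $\mg$, every compressing disk must meet $\gamma$, so $\gamma$ lies in the region $R$ cut off by $T$ which contains the compressing disks.
\begin{itemize}
\item If $R$ lies in a ball of $M$, then $\gamma$ is null-homotopic in $M$, impossible for a periodic orbit.
\item Otherwise $R=V$ is a solid torus containing $\gamma$, and $\gamma$ is not contained in a ball inside $V$ (else null-homotopic). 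Since $T$ is incompressible in $V\setminus\gamma$, the class of $\gamma$ in $\pi_1(V)\cong\Z$ is some $k\neq 0$, i.e.\ $\gamma\simeq\alpha^k$ in $M$ for the core $\alpha$. Moreover $\alpha$ is essential in $M$, since otherwise $\gamma$ would be null-homotopic.
\end{itemize}
The key point is to show $|k|=1$. Here we use the hypothesis: orientable foliations imply, by Proposition \ref{prop_divisibility_per_orbits}, that $\gamma$ is non-divisible, since a divisible orbit requires a companion orbit with non-orientable weak manifolds. Hence $k=\pm1$. Then Lemma \ref{lem:isotopic to core} gives that $\gamma$ is isotopic to the core of $V$, so $V\setminus W$ is homeomorphic to $T^2\times[0,1]$ and $T$ is boundary parallel in $\mg$.

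The main obstacle I expect is the claim that $\gamma\simeq\alpha^k$ forces divisibility in $\pi_1(M)$ with $|k|\ge2$. The concern is whether $k$ could equal $0$ while $\gamma$ is still knotted inside $V$, for instance a Whitehead-type pattern. If $k=0$, then $\gamma$ is null-homotopic in $M$, which is excluded. So only $|k|\geq 2$ needs ruling out, and this is exactly non-divisibility, valid up to conjugacy since $c(\gamma)$ is a conjugacy class.

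A secondary issue is that one must handle the case where $T$ bounds a solid torus on the side \emph{not} containing $\gamma$. That case is fine: there $T$ would be compressible in $\mg$, contrary to assumption. It is also worth checking that Lemma \ref{lem:isotopic to core} indeed yields boundary parallelism; this follows because an isotopy of $\gamma$ to the core inside $V$ extends to an ambient isotopy of $V$ fixing $T$.
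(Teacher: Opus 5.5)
There is a genuine gap in Case 2. You correctly note that a torus compressible in the irreducible manifold $M$ either lies in a ball or bounds a solid torus. Your bullets, however, treat only ``$R$ lies in a ball'' and ``$R$ is a solid torus'', and these two cases do not exhaust the possibilities. The fact that $T$ lies in a ball does not mean that the side of $T$ carrying the compressing disks lies in that ball.

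This is case (a) of the paper. Compress $T$ along $D$ and suppose the resulting sphere bounds a ball $B$ with $D\subset B$. Then $T\subset B$, and the side $Y=B\setminus N(D)$ of $T$ is a ball with a tunnel $N(D)$. If that tunnel is knotted, $Y$ is a nontrivial knot exterior, so $\partial Y$ is incompressible in $Y$. Every compressing disk then lies on the other side $X=(M\setminus B)\cup N(D)$. This $X$ is neither contained in a ball nor a solid torus, since its fundamental group is $\pi_1(M)*\Z$. Here $\gamma\subset X$ threads the tunnel and meets every meridian disk of it, and your argument says nothing about this configuration.

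The configuration really occurs for knots with the properties you use. Take a connected sum of a filling orbit with a nontrivial local knot: it is still filling, non-divisible and essential in $\pi_1$, and its swallow torus is exactly such a $T$, essential in the complement. So some property of orbits beyond their homotopy class must enter.

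The missing ingredient is primeness of periodic orbits, Lemma \ref{lem:prime}, together with Remark \ref{rem:several_arcs} when $\gamma$ crosses $D$ several times. The arcs of $\gamma\cap B=\gamma\cap N(D)$ are parallel copies of the core of the tunnel. Since they are (jointly) boundary parallel in $B$, the tunnel is unknotted, so $Y$ is a solid torus. Its meridian disk is then a compressing disk for $T$ on the side of $T$ away from $\gamma$, and $T$ is compressible in $\mg$.

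With that case added, the rest of your argument matches the paper's case (b). In the solid torus case, $k=\pm1$ by non-divisibility (Proposition \ref{prop_divisibility_per_orbits}, or directly Lemma \ref{lem:power<=2}), and Lemma \ref{lem:isotopic to core} then gives boundary parallelism.
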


\begin{proof}
Let $T$ be an embedded torus in $\mg$. We need to prove that if $T$ is essential in $\mg$ then it is boundary parallel. $T$ can be viewed as a torus in $M$ not intersecting $\gamma$. Since we assume $\gamma$ is filling it follows that $T$ is non-essential when viewed inside $M$ and thus it has a compression disk $D$ embedded in $M$ whose boundary $\partial D$ is contained in $T$ and is nontrivial in $T$. Note that if $\gamma\cap D=\emptyset$ then the disk $D$ is embedded in $\mg$ and hence $T$ is compressible in $\mg$. Therefore, we may assume $\gamma$ has non-trivial intersection with $D$.

Consider the surface $S$ obtained by surgering the torus $T$ along the disk $D$. 
Then $S$ is homeomorphic to a sphere and, since $M$ is irreducible, then $S$ is the boundary of a ball $B$ in $M$.
We have two cases to consider, depending on the location of the disk $D$ with respect to $B$ (see, for instance, \cite{Hatcher_3mflds}).
\begin{itemize}
    \item[(a)] \emph{The disk $D$ is in the interior of the ball $B$}. In this case we have that $T$ is contained in the ball $B$, and this torus can be seen as the boundary of a ``ball with a knotted tunnel'' as in the left side of Figure \ref{fig:tunnel}, the tunnel being $N(D)$. Since $\gamma$ intersects transversely the disk $D$ then $\gamma\cap N(D)$ consists in a finite set of arcs connecting $D_{-\varepsilon}$ with $D_\varepsilon$. By means of Lemma \ref{lem:prime}, each arc $\beta$ in this intersection has a boundary compression, i.e., a disk $R$ whose interior is embedded in $B$ and whose boundary decomposes as a union $\partial R=\beta\cup\delta$, where $\delta$ is an arc contained in $\partial B$. This means that the tunnel $N(D)$ is actually unknotted inside $B$ as in the right side of Figure \ref{fig:tunnel}, and a compression disk for the tunnel determines a second compression disk for the torus $T$. This disk is separated from $\gamma$ by $T$, and thus is embedded in $\mg$. Hence, the torus $T$ is non-essential in $\mg$.

    \item[(b)] \emph{The disk $D$ is disjoint with the ball $B$}. In this case the union $V=B\cup N(D)$ is a solid torus and $\gamma$ is contained in its interior. Since the fundamental group of $V$ is isomorphic to the infinite cyclic group generated by a core $\alpha$ of $V$, there exists $n\in\Z$ such that $\gamma$ is homotopic to $\alpha^n$. Observe that $n\neq 0$, since otherwise $\gamma$ would be null-homotopic, which contradicts the fact that $\gamma$ is a periodic orbit of an Anosov flow (cf. Section \ref{sec_preliminaries}). 

    As $\gamma$ is a periodic orbit, the action of $\gamma=\alpha^n\in\pi_1(M)$ on $\O$ has a fixed point $\bar z$.
Therefore, by Lemma~\ref{lem:power<=2}, $|n|$ is at most 2, and $\alpha$ has a fixed point as well which we denote by $\bar w$ which means there is a periodic orbit $w$ in $M$ homotopic to the core of the torus.
Next, as the flow has orientable foliations and in particular the foliations of $w$ are orientable, $z^{\pm1}\sim w$.
Therefore, by Lemma~\ref{lem:isotopic to core}, the orbit $z$ is isotopic to the core of the torus $w$,
and thus the torus $T$ is boundary parallel in $\mg$.
\end{itemize}

\begin{figure}[t]
    \centering
    \begin{overpic}
    [width=0.4\textwidth]{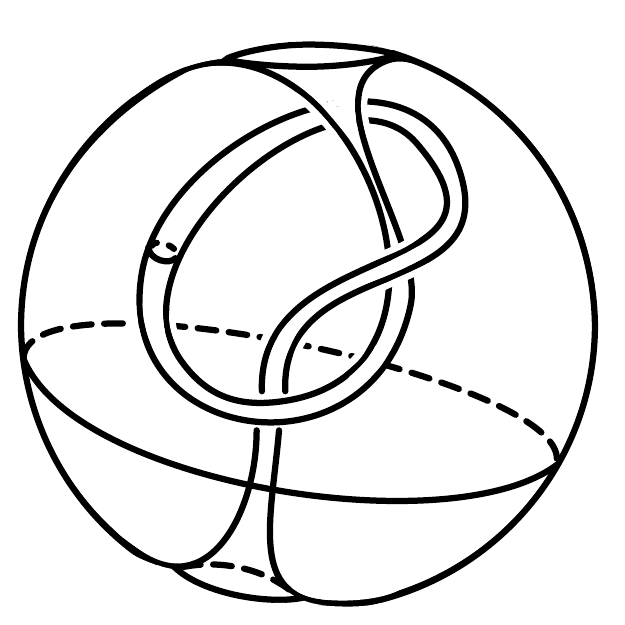}
    \end{overpic}
    \begin{overpic}
    [width=0.4\textwidth]{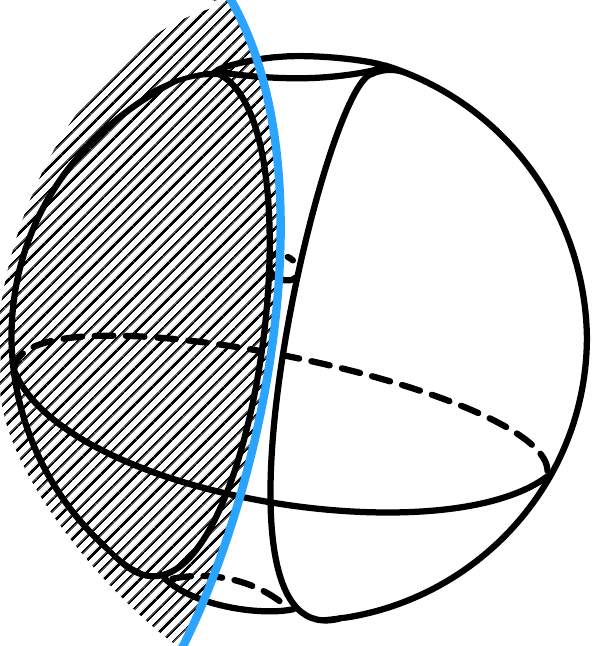}
    \put(40,97){$\gamma$}
    \end{overpic}
    \caption{A ball with a knotted tunnel on the left and with a trivial tunnel on the right.}
    \label{fig:tunnel}
\end{figure}
\end{proof}

\begin{lemma}\label{cor: toroidal implies 2,1 annuulus}
    Let $\gamma$ be a filling periodic orbit for an Anosov flow on a 3-manifold $M$. If $\mg$ is toroidal then $\gamma$ is a $(2,1)$ knot on the boundary of a solid torus. In particular, it is also annular.
\end{lemma}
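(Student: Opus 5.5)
Rerun the proof of Lemma \ref{lem:atoroidal}, this time without the orientability hypothesis, and follow exactly where it breaks down. Suppose $\mg$ is toroidal, and let $T\subset\mg$ be an essential torus, i.e.\ incompressible in $\mg$ and not boundary parallel. Since $\gamma$ is filling, $T$ is inessential in $M$. So, as before, $T$ has a compressing disk $D$ in $M$, and $D$ must meet $\gamma$. Surgering $T$ along $D$ gives a sphere bounding a ball $B$ (using irreducibility of $M$), and we are in one of cases (a) and (b).

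In case (a), the argument given there works without any orientability assumption. It uses only Lemma \ref{lem:prime}, applied arc by arc (together with Remark \ref{rem:several_arcs}, which handles several arcs simultaneously). That argument unknots the tunnel and produces a compressing disk for $T$ in $\mg$, contradicting essentiality. So we must be in case (b): $T$ bounds a solid torus $V=B\cup N(D)$ with core $\alpha$, $\gamma\subset \mathrm{int}(V)$, and $\gamma\simeq\alpha^n$ in $V$ with $n\neq 0$.

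Next, bound $n$. By Lemma \ref{lem:power<=2} and Proposition \ref{prop_divisibility_per_orbits} (reading $\gamma=\alpha^n$ in $\pi_1(M)$), we have $|n|\in\{1,2\}$.

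\begin{itemize}
\item If $|n|=1$, then $\gamma$ is homotopic in $V$ to the core, and Lemma \ref{lem:isotopic to core} makes $\gamma$ isotopic to $\alpha$ inside $V$. The isotopy takes place inside $V$, so $T=\partial V$ is boundary parallel in $\mg$, a contradiction.
\item Hence $|n|=2$. Moreover, $\gamma$ has orientable weak manifolds and is freely homotopic to $\delta^2$ for a periodic orbit $\delta$ with non-orientable weak manifolds. By Lemma \ref{lem:power<=2}, $\delta$ corresponds to the fixed point of $\alpha$.
\end{itemize}

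The main step is to upgrade the homotopy $\gamma\simeq\alpha^{2}$ in $V$ to an isotopy onto a $(2,1)$ curve on a torus parallel to $\partial V$. To do this, adapt the proof of Lemma \ref{lem:isotopic to core}.

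\begin{enumerate}
\item Take the weak stable leaf of $\gamma$. It is an annulus, because its stable bundle is orientable. Lift it to the plane $\tilde L$ in $\tilde M$.
\item Isotope $\tilde L$ equivariantly, in the complement of $\tilde\gamma$, to be transverse to $\tilde T$. Then use irreducibility and innermost-disk arguments to remove the inessential intersection curves.
\item Take the innermost component of $\tilde L\setminus\tilde T$ containing $\tilde\gamma$. Its quotient is an embedded annulus in $V$ with one boundary component $\gamma$ and the other a curve $\gamma'\subset T$. This gives an isotopy of $\gamma$ onto a simple closed curve $\gamma'\subset T$.
\item Now $\gamma'$ is an essential simple closed curve on $\partial V$ homotopic to $\alpha^2$ in $V$, so its slope is $(2,k)$ in the basis $\{l,m\}$ with $m=[\partial D]$.
\item Simplicity forces $\gcd(2,k)=1$, so $k$ is odd. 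After a change of meridian framing (Dehn twisting $V$) this is a $(2,1)$ curve, i.e.\ $\gamma$ is a $(2,1)$ cable of the core of $V$.
\end{enumerate}

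The hard part is steps 2 and 3: the projection of the chosen annulus component must actually be embedded in $V$. This may require using that $\tilde L$ and its translates are disjoint or equal, so that the innermost component is precisely invariant under the stabilizer of $\tilde\gamma$. Step 5 is also delicate: identifying the slope as exactly $(2,\text{odd})$, and deciding whether "$(2,1)$" means up to this framing choice, which I would take as the intended meaning.

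Finally, deduce annularity. Push $\gamma'$ slightly into $V$ to get a torus $T'$ parallel to $T$ containing $\gamma$. The complement of a regular neighborhood of $\gamma$ in $T'$ is an annulus $A$ with both boundary curves on $\partial W$ and core parallel to $\gamma$. Then $A$ is essential in $\mg$.

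\begin{itemize}
\item $A$ is incompressible, since its core is homotopic to $\alpha^2$, which is nontrivial in $M$.
\item $A$ is not boundary parallel. It separates $\mg$ into a piece containing $T$ and a piece that is a solid torus meeting $\partial W$ in annuli of winding $2$. Neither piece is a product $A\times I$ parallel into $\partial W$: the first contains the essential torus $T$, and for the second, the winding number $2\neq 1$ forbids it.
\end{itemize}

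Therefore $\gamma$ is annular.
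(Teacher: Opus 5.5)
Your proposal is correct and follows essentially the paper's route: reduce via the case analysis of Lemma~\ref{lem:atoroidal} to case (b) with $|n|=2$, upgrade $\gamma\simeq\alpha^{2}$ to an isotopy onto a $(2,\text{odd})$ curve on $\partial V$ using the weak-stable-annulus mechanism of Lemma~\ref{lem:isotopic to core}, and take the essential annulus coming from this cable (the paper uses the frontier of a neighborhood of the M\"obius band that $\gamma$ bounds, which is the same annulus as your $T'\setminus N(\gamma)$). The only difference is presentational: the paper applies Lemma~\ref{lem:isotopic to core} in a cover, whereas you rerun its proof directly inside $V$ and read off the slope, which avoids having to push an isotopy down from the cover and makes the essentiality check of the annulus explicit.
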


\begin{proof} 
The only possibility of toroidal occurs in 
option (b) of the previous lemma, where $\gamma$ is
contained in a solid torus, and $\gamma = \alpha^2$,
where $\alpha$ represents a core of the solid torus
(we reverse orientation on $\alpha$ if $n = -2$). 
This implies that with the appropriate choice of 
longitude in this knot, it follows that $\gamma$ is
a $(2,1)$ knot in this torus, as by considering the cover of $M$ corresponding to $\gamma^2$ we get that $\gamma$ is isotopic to $\alpha^2$ by \ref{lem:isotopic to core}.

Finally, note that if $\gamma$ is a $(2,1)$ knot it clearly bounds a M\"obius band, and as the boundary of a neighborhood of the M\"obius band is an annulus with two boundary components on $\gamma$, $\gamma$ is annular in this case.
\end{proof}

\begin{corollary}\label{cor: nonorientable is atoroidal}
    Let $\gamma$ be a filling periodic orbit for an Anosov flow on a 3-manifold $M$. If $\gamma$ is nondivisible, or if $\gamma$ has nonorientable foliations, its complement  $\mg$ is atoroidal.
\end{corollary}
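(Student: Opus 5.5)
The plan is to re-run the proof of Lemma \ref{lem:atoroidal} and isolate the single place where orientability of the foliations was used. Take an embedded torus $T\subset \mg$ that is incompressible in $\mg$. Since $\gamma$ is filling, $T$ is inessential in $M$, so it has a compressing disk $D$ in $M$, and we may assume $D$ meets $\gamma$. Surgering $T$ along $D$ gives a sphere bounding a ball $B$ in $M$, because $M$ is irreducible. Case (a), where $D$ lies inside $B$, uses only Lemma \ref{lem:prime} and Remark \ref{rem:several_arcs}, and these hold for every periodic orbit. So in case (a) $T$ is compressible in $\mg$, exactly as before.

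The remaining work is case (b). Here $V=B\cup N(D)$ is a solid torus containing $\gamma$, and $\gamma$ is homotopic to $\alpha^n$ with $n\neq 0$, where $\alpha$ is a core of $V$. By Proposition \ref{prop_divisibility_per_orbits} we have $|n|\in\{1,2\}$. We treat the two hypotheses of the corollary separately.

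\textbf{Nondivisible case.} The relation $b=a^{n}$, with $b\in c(\gamma)$ and $a$ the class of $\alpha$, forces $|n|=1$. So $\gamma$ is homotopic to $\alpha^{\pm1}$. Lemma \ref{lem:isotopic to core} then shows that $\gamma$ is isotopic in $V$ to a core of $V$. Consequently $V\setminus \mathrm{int}(W)$ is homeomorphic to $T^2\times[0,1]$, and $T$ is boundary parallel in $\mg$.

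\textbf{Nonorientable case.} Suppose the weak stable and weak unstable manifolds of $\gamma$ are nonorientable. If $|n|=2$, the ``moreover'' part of Proposition \ref{prop_divisibility_per_orbits} says that the weak manifolds of $\gamma$ are orientable, which is a contradiction. So again $|n|=1$, and we conclude as in the nondivisible case.

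Putting the cases together, every torus in $\mg$ is compressible or boundary parallel, so $\mg$ is atoroidal. The argument can also be phrased through Lemma \ref{cor: toroidal implies 2,1 annuulus}: toroidality forces $\gamma\simeq\alpha^{2}$, and both hypotheses exclude this.

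The step that needs care is the passage from the homotopy $\gamma\simeq\alpha^{\pm1}$ inside $V$ to the statement about $\pi_1(M)$. In $\pi_1(M)$, the relation $b=a^{n}$ holds only up to conjugacy. This is harmless: we replace $a$ by a conjugate so that the relation holds for the chosen representative $b$. We also need the orientation sign $\pm$ to be irrelevant to Lemma \ref{lem:isotopic to core}; it is, since we may reverse the orientation of $\alpha$.
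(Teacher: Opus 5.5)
Your proposal is correct and is essentially the paper's argument: the corollary is read off from the proof of Lemma~\ref{lem:atoroidal} (equivalently, from Lemma~\ref{cor: toroidal implies 2,1 annuulus}). Case (a) never uses orientability, and case (b) gives a non-boundary-parallel torus only when $\gamma\simeq\alpha^{\pm2}$, which Proposition~\ref{prop_divisibility_per_orbits} excludes under either hypothesis. As in the paper, this tacitly assumes $M$ is orientable, since Proposition~\ref{prop_divisibility_per_orbits} requires it.
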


\begin{lemma}\label{lem:anannular}
   Let $\gamma$ be a filling periodic orbit of an Anosov flow $\phi$ with orientable foliations on a 3-manifold $M$. Then the complement $\mg$ is anannular, i.e. every annulus is compressible or boundary parallel.
\end{lemma}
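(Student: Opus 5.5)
Let $A\subset \mg$ be a properly embedded incompressible annulus; I aim to show it is boundary parallel. Since $\mg$ is irreducible (Lemma \ref{lem:irreducible}) and atoroidal (Lemma \ref{lem:atoroidal}), I will convert $A$ into a torus and feed it into the torus analysis. The boundary curves of $A$ lie on $\partial W$ and are parallel essential curves there. I split into two cases according to their slope.

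\emph{Meridional boundary.} If $\partial A$ consists of meridians of $W$, cap $A$ off with the two meridian disks of $W$. This gives a sphere in $M$ that meets $\gamma$ in exactly two points. Lemma \ref{lem:prime} then produces a disk whose boundary is an arc of $\gamma$ together with an arc of $S$. Equivalently, we get a boundary compression of $A$, so $A$ is inessential.

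\emph{Non-meridional boundary.} Let $N=N(A\cup W)$. The boundary $\partial N$ consists of one or two tori in $\mg$. By Lemma \ref{lem:atoroidal}, each such torus is compressible or boundary parallel in $\mg$.

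If a component $T$ of $\partial N$ is boundary parallel, the region between $T$ and $\partial W$ is a product $T^2\times I$ containing $A$. An incompressible annulus in $T^2\times I$ with both boundaries on one side is boundary parallel, which gives the conclusion.

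If instead $T$ is compressible, I repeat the argument from case (a)/(b) of Lemma \ref{lem:atoroidal}. In case (a) the compressing disk lies in $\mg$, and I expect this to yield a compression or boundary compression of $A$. In case (b), $\gamma$ sits in a solid torus $V$ bounded by $T$ and is homotopic to $\alpha^n$, where $\alpha$ is the core of $V$. Since the foliations are orientable, Lemma \ref{lem:power<=2} gives $n=\pm1$. Lemma \ref{lem:isotopic to core} then shows that $\gamma$ is isotopic to the core of $V$, so $V\setminus W$ is a product $T^2\times I$ containing $A$, and $A$ is boundary parallel as before.

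\emph{Main obstacle.} The hardest step is identifying the slope of $\partial A$ and controlling how $A$ sits relative to $\partial N$. If $A$ is a cable annulus, $\gamma$ would be a $(p,q)$-cable of some curve $\alpha$, with $\gamma\simeq\alpha^p$. Divisibility is then ruled out by Lemma \ref{lem:power<=2} together with orientability: $|p|\le 2$, and $|p|=2$ would force non-orientable leaves, so $|p|=1$, which is a trivial cable. If $\partial N$ is compressible on the side away from $\gamma$, its compression disk must be tracked back through $A$. For that step I would first try an innermost-disk argument on $A\cap D$.
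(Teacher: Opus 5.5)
Your set-up is sound as far as it goes. The meridional case via Lemma~\ref{lem:prime} is the paper's argument, and $\partial N(A\cup W)$ is indeed one or two tori. If one of them is boundary parallel, then $A$ lies in a product $T^2\times I$ and is boundary parallel.

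The gap is the other branch, and that is where the content of the lemma lies. Once Lemma~\ref{lem:atoroidal} is rerun, what remains is that a component $T$ of $\partial N$ is compressible in $\mg$. Then $T$ bounds a solid torus $V\subset\mg$ on the side away from $\gamma$. Its compressing disk $D$ is a meridian disk of $V$, disjoint from $A$. The boundary $\partial D$ crosses the parallel copy of $A$ in $T$ in $k$ essential spanning arcs, where $k\ge 1$ is the winding number in $V$ of the core $c$ of $A$.

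Neither of your proposed tools handles this situation:
\begin{itemize}
\item An innermost-disk argument on $A\cap D$ has nothing to act on. For $k\ge 2$, $A$ is a genuine cabling annulus: incompressible and not parallel through $V$ to $\partial W$. So no local argument at $T$ yields a compression or boundary compression.
\item When $\partial N$ is a single torus (the Klein-bottle configuration), $A$ is non-separating in $\mg$. It can never be boundary parallel, so this case must be excluded outright.
\item Your divisibility argument only works when $\partial A$ is a longitude of $W$. In general $c\simeq\gamma^p$ with possibly $|p|\ge 2$, and $c\simeq\alpha^k$. Lemma~\ref{lem:power<=2} with orientable foliations then only gives $\alpha\simeq\gamma^{p/k}$, i.e.\ $k\mid p$. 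That is not contradictory, e.g.\ $p=k=2$.
\end{itemize}

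The missing idea is the global, dynamical step the paper uses. If every torus of $\partial N$ bounds a solid torus on its far side, then $M=W\cup N(A)\cup V_1\,(\cup V_2)$. The fibration of $W\cup N(A)$ by curves parallel to $c$ extends over each $V_i$, since $c$ is not null-homotopic in $M$ and so is not a meridian. Hence $M$ is a small Seifert fibered space, over $S^2$ with at most three exceptional fibers or over $\mathbb{RP}^2$. Moreover $\gamma$, the core of $W$, is a (possibly exceptional) fiber.

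This must be contradicted using the dynamics. One route is Barbot's result that an Anosov flow on a Seifert manifold is, up to finite covers, a geodesic flow with no periodic orbit homotopic to a power of the fiber. In the twisted $I$-bundle over the Klein bottle union a solid torus, you can instead use Plante--Thurston: $\pi_1(M)$ would not have exponential growth. Without such an input, your proof does not close.
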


\begin{proof}

Assume for the sake of contradiction that $\mg$ contains an essential annulus $A$. The annulus has two parallel boundaries on a boundary torus for a neighborhood of the orbit $\gamma$ that was removed from $M$. 

Neither of the boundary curves cannot be trivial in $M$, as no power of the periodic orbit $\gamma$ can bound a disk, and they cannot be meridional as $\gamma$ is prime and thus a meridional annulus is necessarily compressible. 

Thus, the boundary curves are two parallel nontrivial and non- meridional curves, that divide the boundary torus into two annuli $B$ and $B'$. 
Consider the unions $A\cup B$ and $A\cup B'$, slightly pushed to the inside of $\mg$. These are either both tori or both Klein bottles. 

First suppose they are tori. Since $\gamma$ is filling and does not intersect $A\cup B$ and $A\cup B'$, they must be compressible in $M$.
Consider the compressing disk for $A\cup B$. If $\gamma$ intersects it then, by the same argument as in the proof of Lemma \ref{lem:atoroidal}, either it bounds a solid torus with $\gamma$ its core, or, as $\gamma$ cannot pass essentially through a knotted tunnel unless the tunnel is trivial, there is another compression disk on the other side of $A\cup B$ as well, and it bounds a solid torus that does not contain $\gamma$. 

If $\gamma$ is a core of a solid torus bounded by $A\cup B$, the meridian of a neighborhood of $\gamma$ bounded by $B\cup B'$ is contained in the meridian of  $A\cup B$, thus the annulus $A$ is boundary parallel and we are done. Likewise for  $A\cup B'$. Therefore, we assume this is not the case.

Thus, $A\cup B$ bounds a solid torus that winds as a $(p,q)$ curve around the neighborhood of $\gamma$ (with $|p|>1$). The union of these two tori is a subset $N\subset M$ bounded by $A\cup B'$. By our assumption that $N$ is not a solid torus, the compression disk for $A\cup B'$ is not contained in $N$. As $\gamma\in N$ and $\gamma$ is not contained in a ball,  $A\cup B'$ bounds a solid torus that does not intersect $N$, and thus $M$ is a union of three solid tori, with $\gamma$ as the core of one of them.
If the meridian of the solid torus bounded by $A\cup B'$ has boundary parallel to the $(p,q)$ curves on the neighborhood of $\gamma$, the annulus $A$ is compressible in $M$. Assuming this is not the case, the Seifert fibration by these $(p,q)$ curves of $N$ extends into this solid torus.
Therefore, $M$ is a Seifert fiber space ($M$ has an orbit space of genus 0, and a maximal number of three singular fibers, so to carry an Anosov flow $M$ must be a small Seifert fiber space). However, an Anosov flow on a Seifert fiber space is orbit equivalent to a geodesic flow, up to covers, and carries no orbit that is homotopic to a fiber \cite{Barbot1996Waldhousen} (see also \cite{HammerlindlPotrieShannon} on the small Seifert fiber case). This rules out this possibility.

Finally, suppose that $A\cup B$ and $A\cup B'$ form Klein bottles. Consider a neighborhood of this Klein bottle, that is bounded by a torus $T=A_1\cup A_2\cup B\cup B'$, a double cover of the Klein bottle, where $A_1$ and $A_2$ are the two boundaries of a small neighborhood of $A$. $T$ bounds a twisted $I$-bundle over the Klein bottle on one side, that contains the orbit $\gamma$, and as $\gamma$ is filling $T$ has a compression disk. The compression disk must be on the other side of $T$, and as $\gamma$ is not contained in a three ball, $T$ bounds a solid torus on its other side.
However, in this case the manifold $M$ is the union of a twisted $I$-bundle over the Klein bottle and a solid torus, so $M$  cannot carry an Anosov flow by \cite{PlanteThurstonFundamentalGroup}, as its fundamental group does not have exponential growth.
\end{proof}

\begin{proof}[Proof of Theorems~\ref{thm: orbits are hyperbolic for orientable foliations}-\ref{thm: orbits are hyperbolic when not divisible}]

By Thurston \cite{ThurstonThreeDymensionalManifoldsKleinian}, since the boundary of $\mg$ is made up of tori, it is hyperbolic if and only if it does not contain an essential disk, sphere, torus or annulus.

We show that the manifold $\mg$ contains no essential  disk: suppose it did, and let $D$ be such a disk, with boundary $\alpha$. Then $\alpha$ is a
curve in the boundary of the solid torus neighborhood of $\gamma$.
We say that $D$ is meridional if $\alpha$ bounds a disk in $V$,
and we say that $D$ is longitudinal otherwise. If $D$ is meridional
then $D$ together with a disk in $V$ is a sphere $S$ in $M$ and since
$M$ is irreducible, then this sphere would bound a ball $B$ in $M$.
But this contradicts that $\gamma$ intersects $S$ only once,
showing that $S$ is non separating. Otherwise assume that
$D$ is longitudinal. Then $\alpha$ is homotopic in $M$ to a power
of $\gamma$. This implies that a power of $\gamma$ is null homotopic in $M$ and that is a contradiction.

It follows that $\mg$ contains no essential sphere by \ref{lem:irreducible}, no essential torus by \ref{lem:atoroidal}, and no essential annulus by \ref{lem:anannular}. Therefore $M_\gamma$ is 
hyperbolic.
\end{proof}

\begin{proof}[Proof of Corollary \ref{cor:main}]
It follows from Corollary \ref{cor: toroidal implies 2,1 annuulus} that an orbit $\gamma$ may be have a toroidal complement only when $\gamma$ is contained
in a solid torus and $\gamma = \delta^2$ in $\pi_1(M)$,
where $\delta$ represents a core of the solid torus.  This implies that $\delta$ reverses the orientations of the foliations of $\bar\delta$ when acting on the orbit space (equivalently it has nonorientable two dimensional foliations), while it preserves the orientations of the foliations through $\bar\gamma$, thus $\gamma$ itself has orientable foliations. This proves Corollary \ref{cor:main}.   
\end{proof}

\begin{corollary}\label{cor:nonorientable orbit is hyperbolic}
    Let $\gamma$ be a filling periodic orbit for an Anosov flow on a 3-manifold $M$. If $\gamma$ is nondivisible, or if $\gamma$ has nonorientable foliations, its complement  $\mg$ is hyperbolic.
\end{corollary}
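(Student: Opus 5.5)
The proof runs through Thurston's criterion (Theorem \ref{thm_hyperbolization}) applied to $\mg$, which is compact and orientable with a single torus boundary component. Concretely, I would check four properties: irreducibility, boundary irreducibility, atoroidality and anannularity.

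Irreducibility holds for every periodic orbit by Lemma \ref{lem:irreducible}. Boundary irreducibility follows from the essential-disk argument already given in the proof of Theorems \ref{thm: orbits are hyperbolic for orientable foliations}--\ref{thm: orbits are hyperbolic when not divisible}, which uses no orientability of the foliations. A meridional disk would give a sphere meeting $\gamma$ once, contradicting irreducibility of $M$. A longitudinal disk would make a nontrivial power of $\gamma$ null-homotopic, which Novikov's theorem rules out. Atoroidality is exactly Corollary \ref{cor: nonorientable is atoroidal}. Its content is Lemma \ref{cor: toroidal implies 2,1 annuulus}: a toroidal complement forces case (b) of Lemma \ref{lem:atoroidal}, with $\gamma\simeq\alpha^{\pm 2}$ for the core $\alpha$ of a solid torus. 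Then $\gamma$ is divisible, and by Proposition \ref{prop_divisibility_per_orbits} it has orientable weak manifolds. Either hypothesis excludes this, and the case $\gamma\simeq\alpha^{\pm1}$ gives a boundary parallel torus by Lemma \ref{lem:isotopic to core}.

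The remaining and main step is anannularity. Here I would rerun the proof of Lemma \ref{lem:anannular} and check where orientability of the foliations was used. Take an essential annulus $A$ with boundary on $\partial W$; its boundary curves are neither trivial nor meridional, using primeness (Lemma \ref{lem:prime}). Form the surfaces $A\cup B$ and $A\cup B'$.

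In the Klein bottle case, the argument only uses that $\gamma$ is filling and Plante--Thurston, so it carries over verbatim. In the torus case, the compressing disk analysis reproduces the dichotomy of Lemma \ref{lem:atoroidal}. In the "knotted tunnel" alternative, Lemma \ref{lem:prime} and Remark \ref{rem:several_arcs} give a compressing disk on the far side. In the solid torus alternative, the only place orientability entered was to ensure $\gamma$ is homotopic to $\alpha^{\pm1}$ rather than $\alpha^{\pm 2}$, and this is again guaranteed by non-divisibility or non-orientable foliations via Proposition \ref{prop_divisibility_per_orbits}. The $(p,q)$ cable case with $|p|>1$ is then handled exactly as before: it would make $M$ a small Seifert fibered space in which $\gamma$ is homotopic to a power of a fiber, contradicting Barbot's result.

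The obstacle I expect is bookkeeping. I need to check that no hidden use of orientability remains in the cabling case, for instance in deducing that $\gamma$ is the core when $A\cup B$ bounds a solid torus containing it. I would address this by invoking Lemma \ref{lem:power<=2} again: a winding number $|n|\ge 2$ around a core forces $|n|=2$ and a divisible $\gamma$ with orientable foliations, which is excluded. With all four properties verified, Theorem \ref{thm_hyperbolization} gives that $\mg$, and hence $M\setminus\gamma$, is hyperbolic.
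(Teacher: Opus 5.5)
Your proposal is correct and takes essentially the paper's route. The corollary is Theorem \ref{thm: orbits are hyperbolic when not divisible} together with Corollary \ref{cor:main}, proved via Thurston's criterion from Lemma \ref{lem:irreducible}, the essential-disk argument, Corollary \ref{cor: nonorientable is atoroidal} and Lemma \ref{lem:anannular}, with non-orientable foliations reduced to non-divisibility through Proposition \ref{prop_divisibility_per_orbits}. Your explicit check that Lemma \ref{lem:anannular} uses orientability only to exclude $\gamma\simeq\alpha^{\pm 2}$ in the solid-torus case is exactly the step the paper leaves implicit.
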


%%%%%%%%%%%%%%%%%%%%%%%%%%%%%%%%%%%%%%%%%%%%%%%%%%%%%
\section{The complement of several orbits}
\label{sec:several orbits}

Next we turn to the question of taking the complement of several orbits of an Anosov flow. Since there exist Anosov flows with isotopic orbits \cite{BarthelemeFenley2014KnotTheory}, we cannot expect the result to always be hyperbolic. However, in algebraic flows the result holds. We first define the analog of Definition\ref{def:filling orbit} for a collection of several curves.

\begin{definition}\label{def:filling several orbit}
    Let $\gamma_1,\dots,\gamma_n$ be a collection of simple closed curves in an irreducible 3 dimensional manifold $M$. We say the collection is \emph{filling} if for any $K$ essential torus or Klein bottle in $M$, there exist an $1\leq i\leq n$ so that $\gamma_i$ intersects $K$. We say that the collection is \emph{anannular} if there is no properly embedded annulus in $M_{\gamma_1,\dots,\gamma_n}$.
\end{definition}

\begin{theorem}\label{pro: filling anannular set is hyperbolic}
    Let $\{\gamma_1\dots,\gamma_n\}$ be a filling and anannular collection of orbits in an Anosov flow. Then $M_{\gamma_1,\dots,\gamma_n}$ is hyperbolic.
\end{theorem}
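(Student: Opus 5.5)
We apply Thurston's hyperbolization (Theorem~\ref{thm_hyperbolization}) to $N=M_{\gamma_1,\dots,\gamma_n}$, the complement of disjoint small tubular neighborhoods $W_1,\dots,W_n$ of the orbits. We must show that $N$ is irreducible, boundary irreducible, atoroidal and anannular. Anannularity is a hypothesis, interpreted as the absence of essential annuli. The work is therefore in the other three properties, adapting Lemmas~\ref{lem:irreducible} and~\ref{lem:atoroidal} and the disk argument from the proof of Theorems~\ref{thm: orbits are hyperbolic for orientable foliations}--\ref{thm: orbits are hyperbolic when not divisible}.

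\emph{Irreducibility and boundary irreducibility.} Let $S\subset N$ be a sphere. Since $M$ is irreducible, $S$ bounds a ball $B\subset M$. Each $\gamma_i$ is connected and disjoint from $S$, so it lies either in $B$ or outside it. A $\gamma_i$ inside $B$ would be null-homotopic, which is impossible for a periodic orbit. Hence $B\subset N$. For a compressing disk $D$ with $\partial D\subset\partial W_i$, there are two cases. If $\partial D$ is a meridian, then $D$ together with a meridian disk of $W_i$ is a sphere meeting $\gamma_i$ exactly once. Such a sphere is non-separating, which contradicts irreducibility of $M$. Otherwise $\partial D$ is homotopic in $M$ to a nonzero power of $\gamma_i$, which again contradicts the fact that no power of a periodic orbit is null-homotopic.

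\emph{Atoroidality.} Let $T\subset N$ be an incompressible torus. Viewed in $M$, $T$ is disjoint from every $\gamma_i$, so the filling hypothesis makes $T$ inessential in $M$. Thus $T$ has a compressing disk $D$ in $M$, and we may assume $D$ meets some orbits. Surger $T$ along $D$ to get a sphere bounding a ball $B$, and split into the two cases of Lemma~\ref{lem:atoroidal}.

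In case (a), $T$ bounds a ball with a tunnel. The orbits crossing $D$ meet $B$ in finitely many arcs. By Lemma~\ref{lem:prime} and Remark~\ref{rem:several_arcs}, these arcs are simultaneously trivial, so they can be isotoped into $\partial B$ together. Then the tunnel is unknotted, and the resulting second compressing disk lies in $N$. This contradicts incompressibility of $T$ in $N$.

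In case (b), $T$ bounds a solid torus $V$ containing some orbits $\gamma_{i_1},\dots,\gamma_{i_k}$. Here I would use anannularity instead of the divisibility analysis. If $k\ge 2$, or if $k=1$ and $\gamma_{i_1}$ is not isotopic to the core of $V$, then $V\setminus\bigcup_j W_{i_j}$ should contain an essential annulus. For a single orbit homotopic to a power $\alpha^m$ of the core, I would argue as in Lemma~\ref{cor: toroidal implies 2,1 annuulus}: with $m=\pm2$ the orbit is a $(2,1)$ cable and bounds a M\"obius band, whose boundary-of-neighborhood annulus is essential in $N$. When $k\ge2$, I would take an annulus in $V$ separating two of the orbits, obtained from a meridian-disk/longitudinal analysis, with boundary on $\partial W_{i_1}$ and $\partial W_{i_2}$, or alternatively a cabling annulus. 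Either outcome contradicts the hypothesis. If $k=1$ and $\gamma_{i_1}$ is homotopic to the core with $m=\pm1$, Lemma~\ref{lem:isotopic to core} makes it isotopic to the core, so $T$ is parallel to $\partial W_{i_1}$.

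The main obstacle is this case-(b) step with several orbits inside $V$. There the statement that $V$ minus several knots contains an essential annulus unless $k=1$ and the knot is a core is a genuine 3-manifold fact and needs care. I would try to prove it via the JSJ or Seifert structure of $V\setminus\bigcup W_{i_j}$: the torus $T=\partial V$ is either an essential JSJ torus, or $V\setminus\bigcup W_{i_j}$ is Seifert fibered (a composing space or cable space). Seifert fibered pieces with at least two boundary components contain essential vertical annuli. If instead $T$ is a genuine essential torus of a hyperbolic piece, that configuration must be ruled out separately, again using the fact that flow orbits inside a solid torus behave like braids: their weak stable leaves give annuli to $T$, as in Lemma~\ref{lem:isotopic to core}. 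Those annuli would then be used to produce an essential annulus in $N$.
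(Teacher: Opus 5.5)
Your architecture is the paper's: Thurston's criterion, Lemma~\ref{lem:irreducible} for spheres, the meridional/longitudinal argument for disks, a compressing disk in $M$ from the filling hypothesis, the split into the ball-with-tunnel and solid-torus cases, and, for a single orbit in $V$, the $|m|\le 2$ analysis plus Lemma~\ref{lem:isotopic to core}. The gap is the case of $k\ge 2$ orbits in $V$, which you flag yourself. Your JSJ/Seifert route cannot close it by topology alone. The closure in $V$ of a pseudo-Anosov pure braid (e.g.\ $\sigma_1^2\sigma_2^{-2}$) has every component individually isotopic to the core, yet $V$ minus this link is hyperbolic. So the flow must be used to exclude that configuration.

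The missing idea is to run the proof of Lemma~\ref{lem:isotopic to core} for all orbits in $V$ \emph{simultaneously}. Distinct periodic orbits lie in distinct weak stable leaves, since a non-planar leaf contains exactly one periodic orbit. Hence the innermost pieces of these leaves in $V$, minus the $W_{i_j}$, form disjoint annuli in $N$, each running from some $\partial W_{i_j}$ to an essential curve on $T$. These curves on $T$ are disjoint, hence parallel. Two cyclically adjacent ones coming from different orbits cobound an annulus in $T$ that meets no other such curve. Glue this to the two leaf-annuli and push into $\mathrm{int}\,V$; the result is an annulus in $N$ from $\partial W_{i_1}$ to $\partial W_{i_2}$. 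It is incompressible because its core is homotopic to a nonzero power of $\gamma_{i_1}$. It is neither boundary parallel nor boundary compressible, because its boundary circles lie on different components of $\partial N$. This contradicts anannularity, and it is what the paper means by the orbits being ``jointly isotopic to the core or to $(2,1)$ curves''; the argument does not even need $|m|\le 2$. The weak-stable-leaf idea you mention at the very end is the right one, but use it directly rather than reserving it for a residual hyperbolic-piece subcase.

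There is also a smaller gap in case (a). With several orbits, only those crossing $D$ are forced onto the side of $T$ containing $D$. An orbit missing $D$ may lie in the ball-with-tunnel side $B\setminus N(D)$. Once primeness shows the tunnel is unknotted, that side is a solid torus. Any orbit in it must cross its meridian disk, since otherwise it would lie in a ball and be null-homotopic. So the second compressing disk need not lie in $N$, and you get no contradiction there. In that situation $T$ bounds a solid torus containing orbits, and you must fall back on case (b), as the paper does.
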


\begin{proof}
 Lemma \ref{lem:irreducible} still proves that $M_{\gamma_1\dots,\gamma_n}$ is irreducible by lifting the sphere to the universal cover $\tilde M$.

Suppose $T$ is an essential torus in $M_{\gamma_1\dots,\gamma_n}$. $T$ has to have a compression disk in $M$, by the assumption that the collection $\{\gamma_1\dots,\gamma_n\}$ is filling. Thus, at least one of the orbits must puncture the compression disk in order for T to be essential in their complement.

Again there are two cases:
Either (1) $T$ bounds a solid torus $V$ and  at least one periodic orbit is contained in $V$. If there is a unique orbit $\gamma_i$ in the solid torus, then as it is annanular, it follows from Corollary \ref{cor: toroidal implies 2,1 annuulus} that it is homotopic to the core of the torus. Thus it is isotopic to the core by \Cref{lem:isotopic to core} of $V$, thus $T$ is boundary parallel in $M_{\gamma_1\dots,\gamma_n}$. If there are at least two orbits in $V$ all are equal to the core to some power and are jointly isotopic either to the core or to $(2,1)$ curves (cf. Lemma \ref{lem:atoroidal}). 
This implies there is an annulus bounded by two of the orbits in $V$, in contradiction to the assumption. 

The other option (2) is that $T$ is a 3-ball with a knotted tunnel, and one or more arcs of the orbits pass essentially through the tunnel. In this case the tunnel must be trivial and there is another compression disk to the other side as in the proof of \Cref{lem:atoroidal}. Thus either $T$ is compressible in $M_{\gamma_1\dots,\gamma_n}$ (if no orbit punctures the second compression disk), or there is at least one orbit puncturing the additional compression disk, in which case as in case (1), either the orbits are annular (contradicting the assumption) or $T$ is boundary parallel.

By assumption $M_{\gamma_1\dots,\gamma_n}$ contains no annulus and by Heafliger it contains no disk. This finishes the proof.
\end{proof}

The following example shows a collection of filling geodesics that is not anannular, and whose complement is not hyperbolic.  

\begin{example}
Consider the filling set of closed geodesics $\gamma_1,\gamma_2,\gamma_3,\gamma_4$ given in Figure~\ref{fig:genus2}. The two orbits $\gamma_2$ and $\gamma_3$ correspond to the same simple closed geodesic $\bar\gamma$ endowed with two different orientations. Thus, they co-bound two essential annuli in the torus $T$ that is the unit tangent bundle to this closed geodesic.
\begin{figure}[t]
    \centering
    \begin{overpic}[width=9 cm]{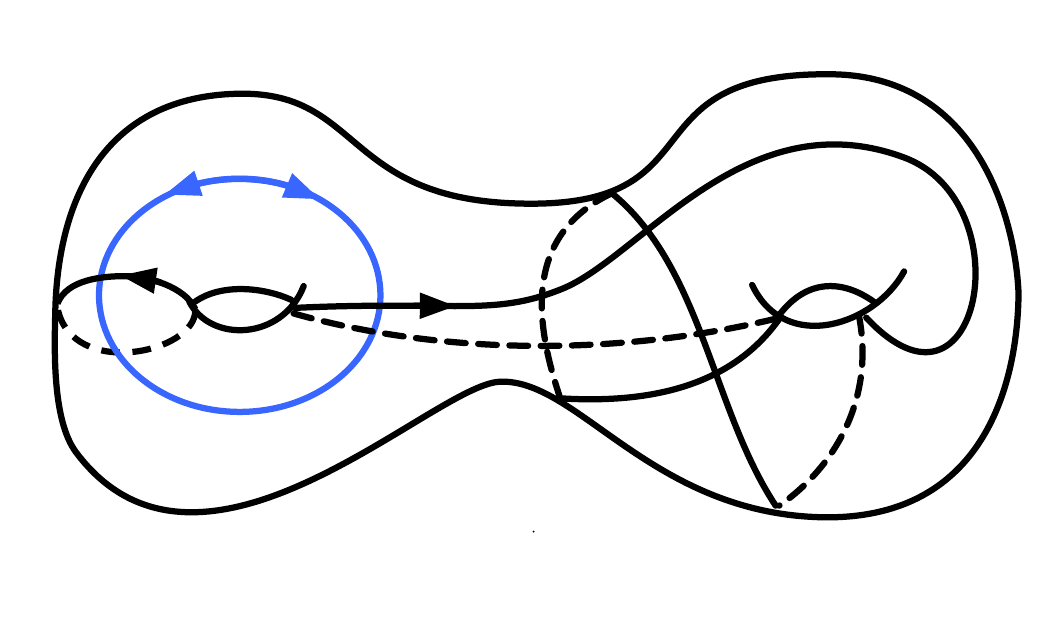}
        \put(14,35){$\gamma_1$}
        \put(16,45){$\gamma_2$}
        \put(30,43){$\gamma_3$}
        \put(40,34){$\gamma_4$}
    \end{overpic}
    \caption{A filling set of four geodesics on a genus two surface.}
    \label{fig:genus2}
\end{figure}
The torus $T$ is essential, and $\gamma_1$ and $\gamma_4$ both intersect it. However, rotating at each point of $\bar\gamma$ the unit vector belonging to $\gamma_2$  counterclockwise until reaching the direction of $\gamma_3$ defines one of the annuli composing $T$, and this annulus does not intersect $\gamma_1$ nor $\gamma_4$.
Thus, $M_{\gamma_1,\gamma_2,\gamma_3,\gamma_4}$ is annular and in particular is not hyperbolic.
\end{example}

%\begin{remark}
    Observe that for a suspension Anosov flow the only essential torus in the suspension manifold is, up to isotopy, a global cross-section. Thus, any collection of closed orbits is filling. For a geodesic Anosov flow, any collection of closed orbits that projects onto a filling collection of geodesics in the surface is filling. A collection of periodic orbits of the geodesic flow is called \emph{complete}, if each time the collection contains both orientations of the same simple closed geodesic, it also contains geodesics intersecting this simple closed geodesic in both directions.   
%\end{remark} 

\begin{proposition}
\label{prop:several_orbits_algebraic_flow}
Let $\{\gamma_1\dots,\gamma_n\}$ be either an arbitrary collection of periodic orbits in a suspension Anosov flow, or a filling and complete collection of periodic orbits of a geodesic flow. Then, the complement $M_{\gamma_1,\dots,\gamma_n}$ is hyperbolic.
\end{proposition}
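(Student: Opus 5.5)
The plan is to reduce to Theorem \ref{pro: filling anannular set is hyperbolic}. For each of the two families of flows I will check that the collection is filling and anannular; the irreducibility, atoroidality and disk arguments already given there then apply verbatim.

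\emph{Suspension case.} The remark before the statement says the only essential torus in a suspension manifold $M_A$ (with $A$ hyperbolic) is, up to isotopy, the fiber torus $F$. Essential Klein bottles do not occur, since $M_A$ is orientable and a Klein bottle would double to a non-fiber essential torus. Every periodic orbit is transverse to every fiber and meets it, so any nonempty collection is filling.

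For anannularity, suppose $A'$ is an essential annulus in $M_{\gamma_1,\dots,\gamma_n}$ with boundary on the tori around $\gamma_i$ and $\gamma_j$. I would pass to the universal cover and the orbit space, which has global product structure. There the two boundary curves give elements of $\pi_1$ conjugate to powers of $\gamma_i$ and of $\gamma_j$, and these powers are freely homotopic through $A'$. Free homotopy between orbits in a suspension only happens for powers of the same orbit: the fixed points in $\O\cong\R^2$ are unique and there are no lozenges, so Theorem \ref{thm: Chain of Lozenges} forces the chain to be trivial. Hence $i=j$, and both boundary curves represent the same class $\gamma_i^k$.

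The annulus then gives a torus $A'\cup B$ enclosing $\gamma_i$, or a torus or Klein bottle not containing it. That surface misses the fiber-intersecting orbit $\gamma_i$ only if it compresses in $M$. Running the case analysis of Lemma \ref{lem:anannular} then ends with $A'$ compressible or boundary parallel. The one exception is $M$ being Seifert fibered or having small fundamental group, and that is impossible for $M_A$, which is Sol.

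\emph{Geodesic case.} Let $M=T^1\Sigma$. Essential tori are, up to isotopy, the vertical tori $T^1_\kappa\Sigma$ over simple closed geodesics $\kappa$. A filling collection of geodesics meets every $\kappa$, so the collection of orbits is filling.

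For anannularity, an essential annulus joining $\gamma_i$ to $\gamma_j$ forces powers of the two orbits to be freely homotopic. In the geodesic flow, the free homotopy classes are the geodesic with either orientation (an orbit and its reversed-direction orbit are freely homotopic up to the fiber relation, via chains of two lozenges). So either $i=j$, or $\gamma_i,\gamma_j$ are the two orientations of one simple closed geodesic $\kappa$, or one non-simple geodesic.

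In the latter situations the candidate annuli are, up to isotopy, the two halves of $T^1_\kappa\Sigma$ cut along $\gamma_i\cup\gamma_j$, exactly as in the Example. Completeness gives orbits crossing $\kappa$ in both directions. These puncture both halves, and the punctures cannot be removed, because orbits meet Birkhoff-type tori efficiently (the argument in the proof of existence of filling orbits). So neither annulus survives in the complement.

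For $i=j$ I would reuse Lemma \ref{lem:anannular}. The divisibility issue does not arise, since every orbit in a geodesic flow is non-divisible by the remark after Proposition \ref{prop_divisibility_per_orbits}, and Corollary \ref{cor: nonorientable is atoroidal} handles the toroidal reduction.

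\emph{Main obstacle.} The hardest step is showing that any essential annulus between $\gamma_i$ and $\gamma_j$ must be isotopic into a vertical torus $T^1_\kappa\Sigma$. I would try the JSJ/characteristic submanifold theory of the Seifert fibered $T^1\Sigma$ minus orbits: an essential annulus there is either vertical or lies in a product region.
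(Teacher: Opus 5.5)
\textbf{Overall.} Your route is the paper's. You reduce to Theorem~\ref{pro: filling anannular set is hyperbolic}, get filling from the list of essential tori, and exclude annuli between distinct orbits by free homotopy. You then let completeness puncture the two halves of $T^1_\kappa\Sigma$, and handle annuli with both ends on one orbit through non-divisibility.

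\textbf{Suspension case.} This half is fine. Arguing with the unique fixed point of each element in the product-structured orbit space is more careful than the paper's appeal to isotopy and Nielsen. You should state two consequences of that fact explicitly. First, it gives non-divisibility; this matters because the case analysis of Lemma~\ref{lem:anannular} also has a $(2,1)$-cable exception when the foliations are non-orientable. Second, it prevents another orbit of the collection from lying in the solid tori produced there, since such an orbit would be freely homotopic to a power of $\gamma_i$.

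\textbf{The gap: geodesic case.} You fold non-simple geodesics into the completeness argument. But completeness is only assumed for \emph{simple} closed geodesics, and for non-simple $\kappa$ the set $T^1_\kappa\Sigma$ is not an embedded torus. Over a double point $x=\kappa(t_1)=\kappa(t_2)$, the open half-circle over $t_1$ contains exactly one of $\pm\dot\kappa(t_2)$. So each ``half'' is an immersed annulus that passes through $\gamma_i\cup\gamma_j$ themselves, and there are no candidate annuli to puncture.

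What must be shown is that the two lifts of a non-simple geodesic cannot cobound an embedded annulus in the complement. This is exactly the case the paper isolates, asserting that only the two orientations of a simple geodesic can be joined. It cannot be read off from homotopy or isotopy in $M$. Indeed, $t\mapsto(\kappa(t),R_\theta\dot\kappa(t))$, $\theta\in[0,\pi]$, with $R_\theta$ rotation by $\theta$ in the fibre, is an isotopy in $M$ from $\gamma_i$ to $\gamma_j$ reversed for \emph{every} closed geodesic.

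Even for simple $\kappa$, the step you flag as the main obstacle is left unproved. The paper simply asserts that the two halves are the only annuli, so there you are at its level, but it is still a hole.

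\textbf{A possible fix.} You do not need a JSJ classification to close it.
\begin{itemize}
\item Lift the annulus to $T^1C$, where $C=\mathbb{H}^2/\langle\kappa\rangle$. Its boundary lies on the closed lifts $\kappa_C^\pm$. In Fermi coordinates $(s,r)$, with angle $\phi$ measured from $\partial_s$, these are $\{r=0,\phi=0\}$ and $\{r=0,\phi=\pi\}$.
\item An orbit whose geodesic crosses $\kappa$ lifts to a proper line along which $r$ is monotone. Since $\dot r=\sin\phi$, $\phi$ stays in $(0,\pi)$ or in $(\pi,2\pi)$ according to the crossing direction.
\item Take two such lines crossing from opposite sides and close them up by arcs far out in the ends. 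The resulting loop is homologous, in the complement of $N(\kappa_C^+\cup\kappa_C^-)$, to a meridian of one of $\kappa_C^\pm$ plus classes missing the compact lifted annulus.
\item Hence the two lines' intersection numbers with the annulus differ by $\pm p\neq 0$, where $p$ is the longitudinal degree of its boundary. So one line meets the annulus, contradicting that it misses every lift of every orbit in the collection.
\end{itemize}
For simple $\kappa$ the two lines come from completeness. For non-simple $\kappa$ they come from the lifts of $\gamma_i$ and $\gamma_j$ through a self-intersection point.
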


\begin{proof}
First, let $\gamma$ be a single filling primitive periodic orbit in an algebraic Anosov flow. If it is not a power 2 orbit it has a hyperbolic complement by Theorem \ref{thm: orbits are hyperbolic for orientable foliations}, thus we may assume it is a power 2. Furthermore, by Lemma \ref{lem:power<=2}, there exists a periodic orbit $\alpha$ such that  $\gamma\sim\alpha^2$.
Both for a geodesic flow and in a suspension flow there exists at most one oriented orbit in any free homotopy class (For suspensions it follows from the work of Nielsen). Therefore, $\gamma$ is equal to $\alpha^2$, in contradiction to the assumption that $\gamma$ is primitive. This yields that for a single filling orbit $\mg$ is always hyperbolic.

Next, 
$M_{\gamma_1,\dots,\gamma_n}$ contains no essential annulus with both boundary components on a single orbit, as since the orbit is not a power 2 such an annulus defines an adjacent essential torus that no orbit of the collection $\{\gamma_1\dots,\gamma_n\}$ intersect it, in contradiction to the assumption that the collection is filling. 

$M_{\gamma_1,\dots,\gamma_n}$ contains no annulus that has boundary components on two different orbits, as this implies there exists an isotopy between one orbit $\gamma_i$ and either $\gamma_j$ or $\gamma_j^{-1}$. However this 
is impossible for algebraic Anosov flows only is $\gamma_i$ and $\gamma_j$ are the two orientation of the same simple closed geodesic. However, by our assumption, in this case both unique annuli between $\gamma_i$ and $\gamma_j$ are punctured by other orbits in the collection $\{\gamma_1,\dots,\gamma_n\}$.

Hence, the proof follows from \Cref{pro: filling anannular set is hyperbolic}.
\end{proof}

%%%%%%%%%%%%%%%%%%%%%%%%%%%%%%%%%%%%%%%%%%%%%%%%%

%%%%%%%%%%%%%%%%%%%%%%%%%%%%%%%%%%%%%%%%%%%%%%%%%

\section{An orbit with non hyperbolic complement}\label{sec:example}

It follows from the previous two sections that a filling orbit may have a non hyperbolic complement only if it divisible, which can happen only if the flow has nonorientable foliations and the orbit is a power 2 of a  `nonorientable' orbit. 
 In this section we give an example of such an orbit that has a non hyperbolic complement, demonstrating that  the condition of either the flow having orientable foliations or the orbit being non divisible is necessary.

\begin{proposition}
\label{prop:counter-example}
    There exists a transitive Anosov flow on a hyperbolic manifold, for which there exists a filling periodic orbit with a non hyperbolic complement.
\end{proposition}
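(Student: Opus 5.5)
We build the example from the geodesic flow of a non-orientable hyperbolic surface, then apply a DA bifurcation and a Goodman--Fried surgery, in the order indicated in the introduction. By Corollary \ref{cor:main} and Lemma \ref{cor: toroidal implies 2,1 annuulus}, any counterexample must be a filling orbit $\gamma$ freely homotopic to $\delta^2$, where $\delta$ is an orbit with non-orientable weak manifolds, and $\gamma$ must be a $(2,1)$ cable of $\delta$ inside a solid torus neighborhood of $\delta$. So the plan is to manufacture an Anosov flow in which a non-orientable orbit $\delta$ has a \emph{distinct} orbit $\gamma$ running twice around it, isotopic to the $(2,1)$ cable.

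We start with the geodesic flow on $T^1\Sigma$, with $\Sigma$ a closed non-orientable hyperbolic surface, and pick a filling closed geodesic $g$ that is one-sided, so that its lift $\delta$ has Möbius band weak leaves. By the Remark after Proposition \ref{prop_divisibility_per_orbits}, the geodesic flow itself has no divisible orbit. So we perform a DA bifurcation along $\delta$: the orbit $\delta$ becomes a repelling (or attracting) orbit $\delta'$, and the original weak stable Möbius band of $\delta$ now contains the new saddle periodic orbit $\gamma$. Because the leaf is a Möbius band, the boundary of a neighborhood of the core in it is connected and wraps twice around $\delta'$. Hence the saddle orbit created by the bifurcation is a single orbit $\gamma$, which is the $(2,1)$ cable of $\delta'$ and is homotopic to $\delta'^2$.

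Next we remove $\delta'$ and reglue it, i.e. do Fried/Goodman surgery on the blown-down repelling orbit, to return to an Anosov flow. Equivalently, we collapse the DA back along a different slope: we blow down a neighborhood of $\delta'$ and Dehn-fill with slope $m$. For $|m|$ large the result is an Anosov flow $\phi_m$ in which $\gamma$ persists as a periodic orbit, the filled core becomes a periodic orbit $\delta_m$, and $\gamma$ lies in a solid torus neighborhood $V$ of $\delta_m$ as a $(2,\ast)$ cable. Consequently $\partial V$ is an incompressible, non-boundary-parallel torus in $M_m\setminus\gamma$, and the complement is not hyperbolic. We choose the surgery so that the filled core again has Möbius band leaves, so that orientation reversal about $\delta_m$ persists and the cable is genuinely a 2-cable rather than an unknotted core.

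The remaining properties are hyperbolicity of $M_m$, transitivity, and filling of $\gamma$. For hyperbolicity, $\delta$ is filling, so by Corollary \ref{cor:nonorientable orbit is hyperbolic} the complement $T^1\Sigma\setminus\delta$ is hyperbolic, and Thurston's hyperbolic Dehn surgery gives that $M_m$ is hyperbolic for all but finitely many $m$. For transitivity, we take $m$ of the sign for which Fried surgery preserves transitivity, since the new flow admits a Birkhoff section derived from one of the geodesic flow. Filling is then automatic: $M_m$ is hyperbolic, so it has no essential tori at all.

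The main obstacle is the middle step: checking that after the surgery the cabling orbit $\gamma$ survives as a \emph{periodic orbit of an Anosov flow}, and not merely of the DA flow, still sitting as a $(2,1)$-type cable of $\delta_m$. I would first try to see both $\gamma$ and $\delta_m$ on a common Birkhoff section and read off their isotopy classes from the first-return map.
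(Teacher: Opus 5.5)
\textbf{There is a genuine gap: the DA saddle cannot survive the surgery step.}

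The construction fails at the step you flag as the main obstacle. The problem is structural, not technical: the DA saddle cannot survive any refilling of $N(\delta')$ that gives back an Anosov flow.

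There are only two options. If the filling solid torus still contains a repelling orbit, the flow has a source and is not Anosov. If instead you blow down, note that in Fried's picture $\gamma$ is exactly the stable-prong orbit on the blown-up boundary torus of $\delta$. Equivalently, its weak unstable leaf is the boundary leaf of the DA attractor facing the basin of $\delta'$, and the semiconjugacy back to an Anosov flow maps $\gamma$ two-to-one onto the core. Blowing down with any slope collapses that torus, $\gamma$ included, onto $\delta_m$.

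So, as you say yourself, your two steps amount to Fried surgery on $\delta$. The result is orbit equivalent to the geodesic flow on $T^1\Sigma\setminus\delta$. Its periodic orbits are therefore $\delta_m$ and the old orbits other than $\delta$, and the DA saddle is not among them.

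No old orbit can take its place either. An admissible surgery slope $\mu_m$ on a non-orientable orbit must meet the $(2,1)$ slope $\lambda$ of the prong orbits exactly twice. Hence $\mu_m\equiv\lambda\equiv\mu_0$ modulo $2$. An orbit forming a $(2,\ast)$ cable of $\delta_m$ would then wind an even number of times around $\delta$ in $T^1\Sigma$. That makes it null-homotopic or divisible, contradicting the Remark after Proposition \ref{prop_divisibility_per_orbits}.

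\textbf{What is missing is a second orbit to serve as the core.} The paper finds it in the same Klein bottle.
\begin{enumerate}
\item Take $g$ to be a \emph{simple} one-sided geodesic (the core of a M\"obius band in $\Sigma$), not a filling one. Then $T^1(g)$ is an embedded Klein bottle. It is the union of two M\"obius bands whose cores are the two lifts $\delta^{+},\delta^{-}$ of $g$ with opposite orientations, and their common boundary is a $(2,1)$ cable of both.
\item Do the expanding DA on $\delta^{-}$ only. The new saddle $\varepsilon$ then bounds an embedded M\"obius band $B$ with core $\delta^{+}$. It is made of the half of $T^1(g)$ around $\delta^{+}$, an annulus in $\partial N(\delta^{-})$, and a half weak-stable leaf of $\varepsilon$.
\item To return to an Anosov flow without destroying $\varepsilon$ or $\delta^{+}$, do not refill $N(\delta^{-})$. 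Instead make a contracting DA on a second such orbit, remove both neighbourhoods, and glue the entering torus to the exiting one as in B\'eguin--Bonatti--Yu. Choose the gluing so that some orbit crosses the glued torus.
\item Hyperbolize by large-coefficient Fried surgeries on a filling, anannular family of orbits disjoint from $B$. This uses Theorem \ref{pro: filling anannular set is hyperbolic} together with hyperbolic Dehn surgery.
\end{enumerate}
Your final observation then applies. The resulting manifold is atoroidal, so $\varepsilon$ is filling, while $\partial N(B)$ is an essential torus in its complement.
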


\begin{proof}
Let $S_2$ be a genus 2 surface, and let $\alpha$ be a filling closed geodesic on $S_2$. Let $D_1$ and $D_2$ be two small disks in the complement of $\alpha$, and define a new nonorientable hyperbolic surface $S_c$ by gluing two M\"obius bands $M_1$ and $M_2$ so that $\partial M_i$ glues to $\partial D_i$. Denote by $\delta_i$ the core of the M\"obius band $M_i$.
Note that the unit tangent bundle to each $\delta_i$ is an essential Klein bottle $K_i$.

Adjusting via an isotopy in $S_2\setminus(D_1\cup D_2)$ and abusing notation, we assume 
$\alpha$ is a geodesic for the hyperbolic metric on $S_c$. Likewise we assume $\delta_1$ and $\delta_2$ are closed geodesics. Define $\beta$ to be a closed geodesic that does not intersect $\delta_1$, and intersects $\delta_2$ once. Finally, let $\gamma$ be an infinite geodesic, with $\alpha$ limit set equal to $\delta_1$, $\omega$ limit set equal to $\delta_2$, and so that it never intersects $\delta_1\cup\delta_2$.

For each $i=1,2$, denote by $\delta_i^{+}$ and $\tilde\delta_i^{-}$ the orbits of the geodesic flow that are lifts of $\delta_i$ to the unit tangent bundle in two opposite orientations. Observe that for each $i$, The two orbits are contained in a Klein bottle that is the unit tangent bundle to $\delta_i$.

\begin{figure}[t]
    \centering
    \begin{overpic}[width=12 cm]{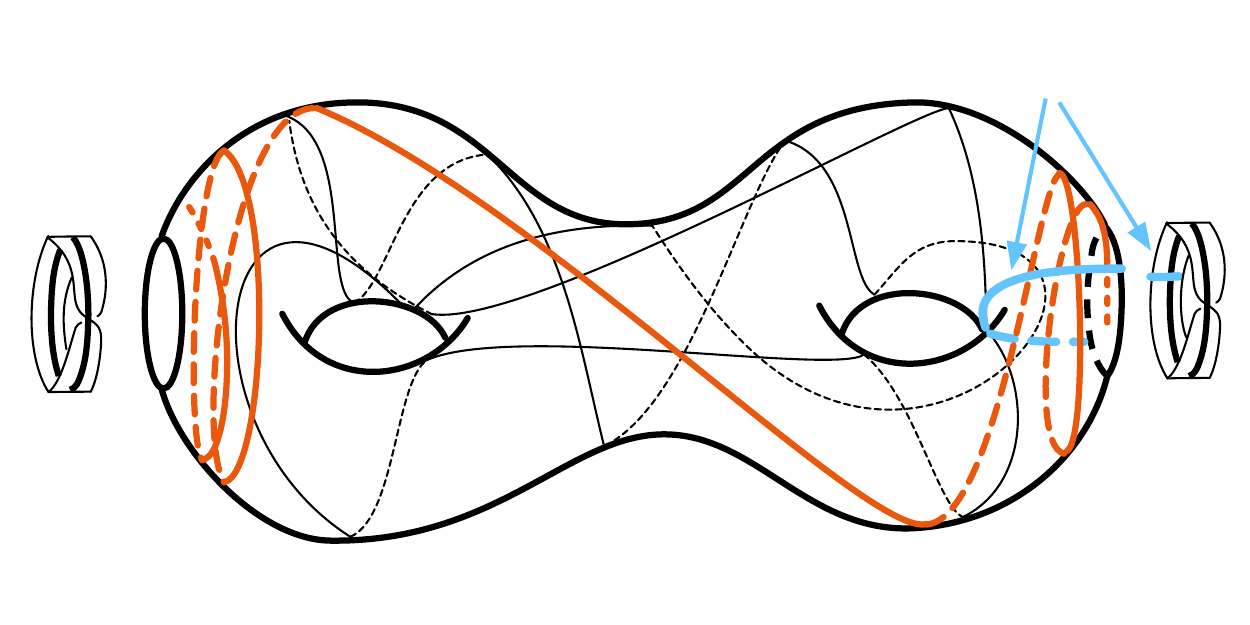}
    \put(68,12){$\gamma$}
    \put(32,12){$\alpha$}
    \put(5,15){$\delta_1$}
    \put(95,16){$\delta_2$}
    \put(83,43){$\beta$}
    \end{overpic}
    \caption{The genus 2 surface $S_2$ to which we gule two M\"obius bands, with a few geodesics that are used in the proof.}
    \label{fig:example}
\end{figure}

Make an expanding DA along $\delta_1^{-}$ together with a contracting DA along $\delta_2^{-}$. By removing suitable tubular neighborhoods of both orbits, we obtain a 3-manifold with two torus boundaries: $T_1$ where the flow is entering, and $T_2$ where the flow is escaping.

For each $i=1,2$, after making the DA operation there is an extra orbit $\epsilon_i$ that appears near the perturbed region, has orientable weak stable and weak unstable manifolds, and is isotopic to $(\delta_i^{+})^2$ through an embedded M\"obius band $B_i$. This Mobius band consists of a Mobius band $A_i^{+}\subset T^1(\delta_i)$, an annulus inside $T_i$, and a half stable (resp. unstable) manifold of $\epsilon_i$.

When making the DA operations, the original orbits of the geodesic flow on $S_c$ will ``survive" the construction. That is, if we call $\varphi_t$ the geodesic flow on $T^1 S_c$ and $\tilde \varphi$ the flow obtained after the DA operation, then there is a $\tilde\varphi$-invariant compact set $\Lambda\subset T^1 S_c$ and a surjective continuous map $h:\Lambda\to T^1 S_c$ taking $\tilde\varphi$-orbits onto $\varphi$-orbits of the geodesic flow (cf. \cite{beguin2017building}). In particular, there is an orbit segment $\tilde\gamma^{+}$ of $\tilde\varphi$ that such that $h:\tilde\gamma^{+}\mapsto\gamma^{+}$, and this orbit segment verifies that it starts at $T_1$, ends at $T_2$, and is disjoint from the M\"obius band $B_1$. Without loss of generality, we may assume that $\tilde\gamma^{+}$ equals the geodesic segment $\gamma^{+}$. 

The resulting manifold with boundary carries a vector field with a hyperbolic $\tilde\varphi$-invariant set inside, and the stable (resp. unstable) foliations of this set hit $T_1$ (resp. $T_2$) in a Reeb-like fashion. Thus, gluing $T_1$ and $T_2$ with a diffeomorphism that makes these foliations transverse along the boundary results in a new manifold $N$ endowed with an Anosov flow $\psi$, that is also transitive. We refer to \cite{beguin2017building} for more details on gluing hyperbolic pieces along the boundary to obtain Anosov flows.

We will choose the gluing diffeomorphism in such a way that the endpoint of $\gamma^{+}$ in $T_2$ is glued to its starting point in $T_1$. In this way, we produce a closed orbit for $\psi$ that we denote by $\gamma$, that hits transversally the gluing torus $T_1\sim T_2$, that we now denote simply by $T$.

The union of orbits $\alpha\cup\beta\cup\gamma$ is filling, intersecting every essential torus in the Seifert fibered piece $T^1(S_2)$, intersecting $T$, and intersecting (thanks to $\beta$) the Klein bottle obtained as the union of the two annuli that are the complements of $\delta_1^{-}$ and $\delta_2^{-}$ in $T^1_{\delta_1} S_c$ and $T^1_{\delta_1} S_c$, that are attached to each other after gluing $T_1$ to $T_2$. It also holds that $N_{\alpha,\beta,\gamma}$ is anannular. This follows because in the Seifert fiber piece every annulus is be vertical. Hence, $N_{\alpha,\beta,\gamma}$ is hyperbolic by \Cref{pro: filling anannular set is hyperbolic}.
Now, if we perform Fried surgeries along these orbits with high enough coefficients, we obtain a hyperbolic 3-manifold $M$ carrying an Anosov flow $\phi$.

Finally, consider the orbit $\varepsilon_1$ of $\phi$. As $M$ is hyperbolic it is atoroidal and thus $\varepsilon_1$ is trivially filling. However, $\varepsilon_1$ is the boundary of a M\"obius band in $M$ and the boundary of a small neighborhood of this band is an essential torus in $M_{\varepsilon_1}$. Thus $\varepsilon_1$ is a filling orbit with a non hyperbolic complement, as required.
\end{proof}

\bibliographystyle{amsplain}
\bibliography{bibliography.bib}

\end{document}